\theoremstyle{plain}
\theoremstyle{definition}
\newtheorem{theorem}{Theorem}[section]
\newtheorem{remark}[theorem]{Remark}
\newtheorem{example}[theorem]{Example}
\newtheorem{proposition}[theorem]{Proposition}
\newtheorem{corollary}[theorem]{Corollary}
\DeclareMathAlphabet{\mathpzc}{OT1}{pzc}{m}{it}
\begin{document}

\begin{center}
 {\large\sc Raney numbers, threshold sequences and Motzkin-like paths}
 \bigskip
 
 {\bf Irena Rusu\footnote{Irena.Rusu@univ-nantes.fr}}
 
 {LS2N, UMR 6004, Universit\'e de Nantes, France}

\end{center}
\bigskip

%
%
%
%
%
%
%
%
%
 \begin{abstract}%
We provide new interpretations for a subset of Raney numbers, involving threshold sequences and 
Motzkin-like paths with long up and down steps.

Given three integers $n,k,l$ such that $n\geq 1, k\geq 2$ and $0\leq l\leq k-2$, a $(k,l)$-threshold sequence of length $n$  is
any strictly increasing sequence $S=(s_1s_2\ldots s_n)$ of integers such that $ki\leq s_i\leq kn+l$. 
These sequences  are in bijection with ordered $(l+1)$-tuples of $k$-ary trees. We prove this result and
identify the Raney numbers that count the $(k,l)$-threshold sequences. As a consequence, when $k=2$ and $k=3$, we
deduce combinatorial identities involving Catalan numbers and powers of 2, and
respectively Fuss-Catalan and Raney numbers. Finally, we show how to represent threshold sequences as Motzkin-like paths with long
up and down steps, and deduce that these paths are enumerated by the same Raney numbers. 

 \end{abstract}
%


\section{Introduction}

In this paper, a $k$-ary tree ($k\geq 2$) is any tree whose internal nodes, the root included, have exactly $k$ children.  
It follows that such a tree either is a unique node (the tree is then called {\em trivial}), or is made of an internal node 
(the root) and $k$ smaller $k$-ary subtrees.  We assume  that the $k$ subtrees are ordered from left to right, meaning that 
two trees are equal if and only if either both of them are trivial, or both of them are non-trivial and they have the 
same $k$-subtrees identically ordered.

With the notation $C^{(k)}_n$ for the number of $k$-ary trees with $n$ internal nodes, we then have $C^{(k)}_0=1$ and:

\begin{equation}
C^{(k)}_n=\sum_{\substack{j_1+j_2+\ldots +j_k=n-1\\ \forall\, h\,:\,\, 0\leq j_h\leq n-1}}^{}C^{(k)}_{j_1}C^{(k)}_{j_2}\ldots C^{(k)}_{j_k}
\label{eq:Tnrec}
\end{equation}

This recurrence defines the so-called Fuss-Catalan numbers,  for which the following closed form is known \cite{graham1994concrete}:

\begin{equation}
C^{(k)}_n=\frac{1}{(k-1)n+1}\binom{kn}{n}.
\label{eq:generalizedCatalan}
\end{equation}

The Fuss-Catalan numbers are the particular case with $r=1$ of the Raney numbers, defined as follows \cite{raney1960functional}:
\bigskip

\begin{equation}
R_n^{(k,r)}=\frac{r}{kn+r}\binom{kn+r}{n}=\frac{r}{(k-1)n+r}\binom{kn+r-1}{n}
 \label{eq:Raney}
\end{equation}

As proved in \cite{hilton1991catalan}, the Raney numbers are also related to the Fuss-Catalan numbers by the following relation:

\begin{equation}
R_n^{(k,r)}=\sum_{\substack{i_1+i_2+\ldots +i_r=n\\ \forall\, h\,:\,\, 0\leq i_h\leq n}}^{}C^{(k)}_{i_1}C^{(k)}_{i_2}\ldots C^{(k)}_{i_r}
\label{eq:RaneyRecCatalan}
\end{equation}

Using the interpretation of each $C^{(k)}_{i_h}$, which represents the number of $k$-ary trees with $i_h$ internal nodes,
the previous recurrence relation implies that:

\begin{remark}
The Raney number $R_n^{(k,r)}$ counts the number of ordered $r$-tuples of  $k$-ary trees with a total number
of $n$ internal nodes.
\label{rem:Raneycountsktrees}
\end{remark}
\bigskip


The cases where $k=2$ and $k=3$ are particularly useful. Binary trees with $n$ internal nodes, obtained when $k=2$, are counted by  
$C^{(2)}_n=\frac{1}{n+1}\binom{2n}{n}$, for $n\geq 0$, also called {\em Catalan number} and denoted $C_n$.
Ternary trees with $n$ internal nodes, obtained when $k=3$,  are counted by the Fuss-Catalan numbers 
$C^{(3)}_n=\frac{1}{2n+1}\binom{3n}{n}=R_n^{(3,1)}$, and  the ordered pairs of ternary trees with total number of $n$ internal nodes 
are counted by the Raney number $R_n^{(3,2)}=\frac{2}{2n+2}\binom{3n+1}{n}=\frac{1}{n+1}\binom{3n+1}{n}$.
To simplify the notations, we denote by $T_n=C^{(3)}_n=\frac{1}{2n+1}\binom{3n}{n}$ the number of ternary trees
and by $U_n=R_n^{(3,2)}=\frac{1}{n+1}\binom{3n+1}{n}$ the number of ordered pairs of ternary trees, both with $n$ internal nodes. In the Online Encyclopedia of Integer Sequences \cite{oeis}, the sequences $T_n$ and $U_n$ 
are respectively OEIS A001764 and OEIS A006013, and enumerate many other combinatorial objects.

A {\em $(k,l)$-threshold sequence of length} $n$, for $n\geq 1$, $k\geq 2$ and $0\leq l\leq k-2$, is any strictly increasing sequence 
$S=(s_1s_2\ldots s_n)$ of integers such that $ki\leq s_i\leq kn+l$. See Figure \ref{fig:standard}. (Note that the case $l=k-1$ is not significant, since 
the resulting sequences are easily identified as the $n$-length prefixes of the $(k,0)$-threshold sequences of length $n+1$.) The contiguous subsequence $(s_p s_{p+1} \ldots s_q)$
of $S$ is denoted by $S[p,q]$. A {\em proper} $(k,l)$-threshold sequence
is a $(k,l)$-threshold sequence such that $s_n=kn+l$, {\em i.e.} is either a $(k,0)$-sequence, or a $(k,l)$-threshold sequence with $l\geq 1$ 
which is not a $(k,l-1)$-threshold sequence. Threshold-like sequences appear for instance in the characterization of maximally admissible pinnacle sets \cite{rusu2021admissible} (with $k=3$).

\begin{example}
Let $k=3$ and $n=6$. Then $S_1=(3\, 6\, 14\, 15\, 17\, 18)$ is a proper $(3,0)$-threshold sequence, whereas $S_2=(3\, 6\, 14\, 15\, 17\, 19)$
is a proper $(3,1)$-threshold sequence of length $6$. Moreover, $S_1$ is also a $(3,1)$-threshold sequence, but not a proper one. 
The sequence $S_3=(3\, 4\, 14\, 15\, 17\, 18)$ is not a $(3,0)$-threshold sequence since $s_2<6$.
\end{example}

\begin{remark}
The sequences $S'=(s'_1s'_2\ldots s'_n)$ of integers such that $ki+d\leq s'_i\leq kn+l+d$, for a fixed integer $d$, are in 
bijection with the $(k,l)$-threshold sequences, and are thus counted by the same formulas. 
We call them {\em $(k,l)$-sequences with offset $d$}.
 \label{rem:decalage}
\end{remark}

In this paper, we show that $(k,l)$-threshold sequences  are related to $k$-ary trees in the sense that
$(k,l)$-threshold sequences of length $n$ are in bijection with ordered $(l+1)$-tuples of $k$-ary trees with a total of $n$ internal nodes, and are
thus counted by the Raney numbers $R_n^{(k,l+1)}$. We further deduce combinatorial identities involving Catalan, Fuss-Catalan and Raney numbers
as well as bijections between $(k,l)$-threshold sequences and related combinatorial objects.

To this end, in Section \ref{sec:three} we first consider the particular case $k=3$, and obtain an implicit bijection by showing 
that the number of $(3,l)$-threshold sequences of length $n$
equals that of $(l+1)$-tuples of ternary trees with $n$ internal nodes, for $l=0,1$. Then, in Section \ref{sec:explainbijections} we propose, 
for arbitrary values $k$ and $l$, an explicit
bijection between $(k,l)$-threshold sequences and $(l+1)$-tuples of $k$-ary trees.  In Section \ref{sec:combinatorailidentities} we present 
several combinatorial identities resulting
from the recurrence relations proved in Sections \ref{sec:three} and \ref{sec:explainbijections}. Finally, in Section \ref{sec:other},
we represent threshold sequences as Motzkin-like paths with up steps $(1,u)$, $u\geq 1$, horizontal
steps $(1,0)$ and down steps $(1,-d)$, $1\leq d\leq k-1$. This representation allows us to deduce counting formulas for these
paths too. Section \ref{sec:conclusion} is the conclusion.

\section{$(3,l)$-Threshold sequences of length $n$}\label{sec:three}

In this section, $k=3$ and the $(3,l)$-threshold sequences are called {\em simple 3-threshold sequences} when $l=0$, and {\em double
3-threshold sequences} when $l=1$.
Simple $3$-threshold sequences are thus also double $3$-threshold sequences. 

\begin{figure}[t]
 \centering
 \vspace*{-3cm}
 \includegraphics[width=0.95\textwidth]{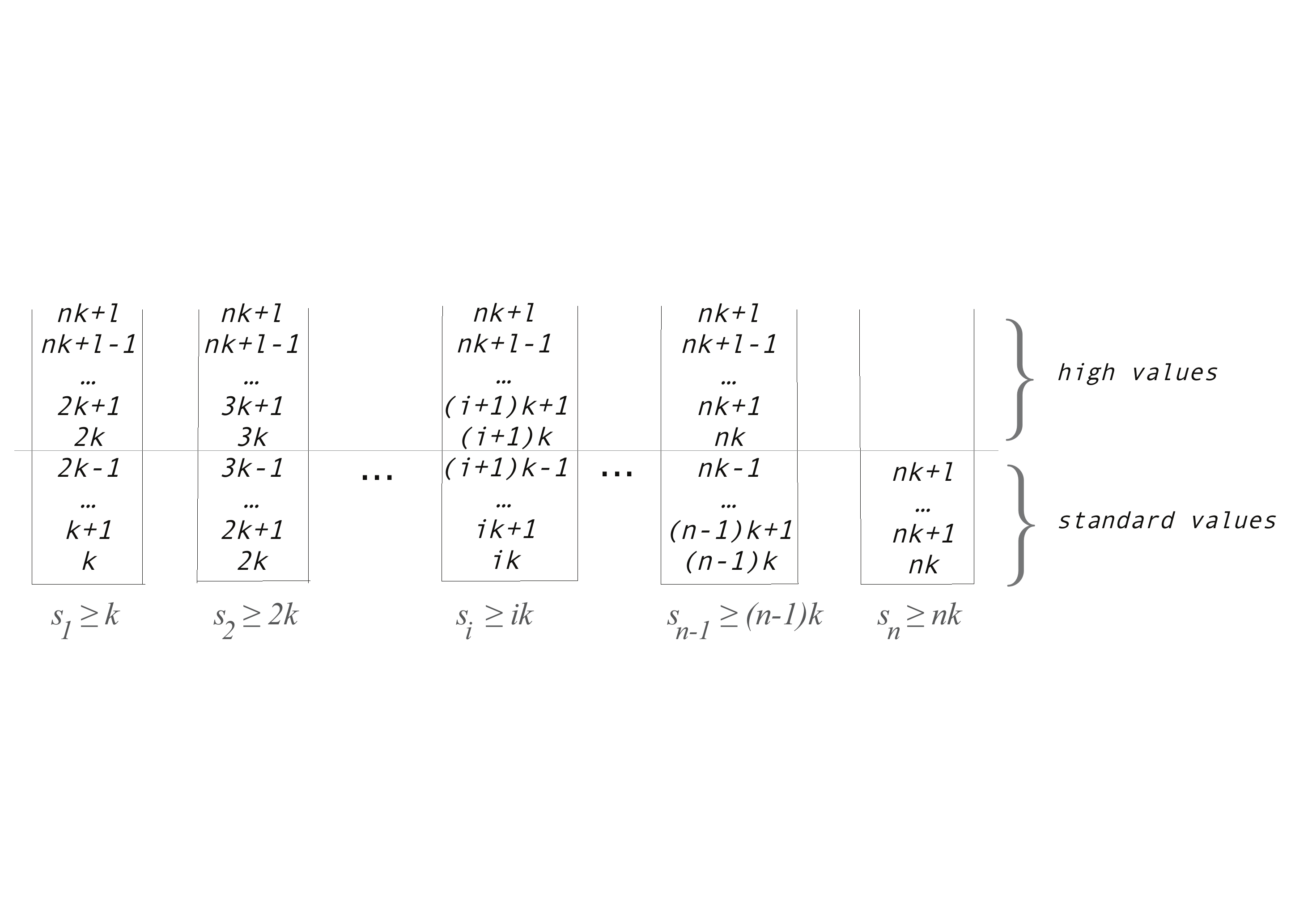}
 \vspace*{-3cm}
 \caption{Construction of a $(k,l)$-threshold sequence. For each $s_i$, a standard or a  high value must be chosen, larger than $s_{i-1}$.} 
 
 \label{fig:standard}
\end{figure}

Let $a_n$ ($b_n$) be the number of simple (double) $3$-threshold sequences of length $n$. Note that $a_1=1, a_2=3$, whereas 
$b_1=2, b_2=7$. By convention, we define $a_0=b_0=1$.
We use various ways to count $a_n$ and $b_n$, then combine them to show that $a_n$ satisfies the recurrence relation in 
Equation (\ref{eq:Tnrec}) and
that $b_n$ counts ordered pairs of simple $3$-threshold sequences. The first result allows us to deduce that
simple $3$-threshold sequences are in bijection with ternary trees, and the second result ensures the bijection between
double $3$-threshold sequences and ordered pairs of ternary trees. 

For each $i$ with $1\leq i\leq n-1$, let the values $3i,3i+1,3i+2$ be called {\em standard values} for $s_i$, and the values
larger than $3i+2$ be called {\em high values} for $s_i$. For $s_n$, there are only standard values, either one or two depending
whether the threshold sequence is simple or double. See Figure \ref{fig:standard}, with $k=3$.

\subsection{First count: from left to right} 
A simple $3$-threshold sequence of length $n$ can be of two types with respect to $s_1$. 

\begin{itemize}
 \item  When $s_1$ is a standard value, 
$(s_2 \ldots s_n)$ is any simple $3$-threshold sequence of length $n-1$ and offset $3$. Using Remark \ref{rem:decalage},  
the number of such sequences is $a_{n-1}$, thus the number of simple $3$-threshold sequence of length $n$ with these properties
is $3a_{n-1}$, since there are three standard values available for $s_1$.
\item When $s_1$ is a high value, there exists a largest value $h$ such that $s_1, s_2, \ldots, s_h$ are high values. Then $s_{h+1}$ 
is a standard value, and $1\leq h\leq n-2$, since $s_n=kn$ and thus no high value is available for $s_{n-1}$. The simple $3$-threshold sequence
is thus made of a subsequence $(s_1 \ldots s_{h+1})$ of length $h+1$ containing only high values except for the last one, 
which is a standard value, followed by a subsequence $(s_{h+2} \ldots s_n)$ of length $n-h-1$ which may be any simple $3$-threshold sequence 
of this length with offset $3(h+1)$. Using again Remark \ref{rem:decalage}, the number of 
such $3$-threshold sequences for a fixed $h$ is thus  $c_{h+1}a_{n-h-1}$, where $c_{h+1}$ is the number of sequences $(s_1 \ldots s_{h+1})$ described
above. Each such sequence satisfies $3(i+1)\leq s_i\leq 3(h+1)+2$ for $1\leq i\leq h$
and $3(h+1)\leq s_{h+1}\leq 3(h+1)+2$. Thus its $h-1$ first elements are constrained as in a $3$-threshold sequence with offset $3$, 
whereas $s_{h}, s_{h+1}\in\{3(h+1), 3(h+1)+1, 3(h+1)+2\}$. Then we cannot have $s_{h+1}=3(h+1)$, since no value would be available for $s_{h}$. Moreover, when
$s_{h+1}=3(h+1)+1$ we have $s_h=3(h+1)$ and $(s_1 \ldots s_{h-1} s_h)$ is any simple $3$-threshold
sequence with offset $3$ of length $h$; there are $a_{h}$ such sequences. And when $s_{h+1}=3(h+1)+2$ we have $s_h\in\{3(h+1), 3(h+1)+1\}$ 
and $(s_1 \ldots s_{h-1} s_h)$ is any double $3$-threshold sequence with offset $3$ of length $h$; there are $b_{h}$ such sequences.
We deduce that $c_{h+1}=a_h+b_h$. 
\end{itemize}

In conclusion, the number of simple $3$-threshold sequences of length $n$ is given by:

\begin{equation}
 a_n=3a_{n-1}+\sum \limits_{h=1}^{n-2}(a_h+b_h)a_{n-h-1}
 \label{eq:anmixte}
\end{equation}

For the double $3$-threshold sequences, the enumeration is similar, except that we may have $h=n-1$ when $s_1$ is a high value.
This can only happen for sequences with $s_{n-1}=3n$ and $s_n=3n+1$. For these sequences, $(s_1 \ldots s_{n-1})$ is any
simple $3$-threshold sequence, so an amount of $a_{n-1}$ must be added to the final count:

\begin{equation}
 b_n=3b_{n-1}+\sum \limits_{h=1}^{n-2}(a_h+b_h)b_{n-h-1}+a_{n-1}
 \label{eq:bnmixte}
\end{equation}

\subsection{Second count: from right to left}

Consider a simple $3$-threshold sequence. Recall that $s_n=3n$ and make the following observation.

\begin{remark}
For every $j\leq n-1$, the sequences $(s_1 s_2 \ldots s_j)$ satisfying for each $i$ with $1\leq i\leq j$ the constraint 
$3i\leq s_i\leq 3j+2$ ({\em i.e.} such that $s_j$ is a 
standard value) are exactly the $j$-length prefixes of the simple $3$-threshold sequences of length $j+1$. 
Thus, there exist $a_{j+1}$ such sequences.
\label{rem:ajplus1}
\end{remark}

Again, a simple $3$-threshold sequence may be of two types.
\begin{itemize}
 \item When $s_{n-1}\in\{3(n-1), 3(n-1)+1\}$, the sequence $(s_1 \ldots s_{n-1})$ is any double $3$-threshold sequence of length $n-1$. 
 There are $b_{n-1}$ such simple $3$-threshold sequences of length $n-1$. 
 \item When $s_{n-1}=3(n-1)+2$,  two cases are possible. When $s_{n-2}$ is a standard value, by Remark \ref{rem:ajplus1} we deduce
 that there are  $a_{n-1}$ simple $3$-threshold sequences of length $n$ with this property. When $s_{n-2}$ is a high value, we have
 $s_{n-2}\in\{3(n-1), 3(n-1)+1\}$ and there exists 
 a lowest index $h$, with $0\leq h\leq n-3$, such that $s_{h+1}, \ldots, s_{n-2}$ are high values. Thus $s_{h}$, when $h\geq 1$, 
 is a standard value. In this case, the simple $3$-threshold sequence of length $n$ is made of a subsequence $(s_1 s_2 \ldots s_h)$
 satisfying Remark \ref{rem:ajplus1} for $j=h$, followed by a double $3$-threshold sequence $(s_{h+1} s_{h+2} \ldots s_{n-2})$ with offset 3 and 
 length $n-2-h$, which is on its turn followed by $3(n-1)+2$ and $3n$. The number of such sequences, for a fixed $h$,
 is $a_{h+1}b_{n-2-h}$.
\end{itemize}

We deduce that:

\begin{equation}
 a_{n}=b_{n-1}+a_{n-1}+\sum\limits_{h=0}^{n-3}a_{h+1}b_{n-2-h}=b_{n-1}+a_{n-1}+\sum\limits_{h=1}^{n-2}a_{h}b_{n-1-h}=\sum\limits_{h=0}^{n-1}a_{h}b_{n-1-h}
\label{eq:anfctbn}
 \end{equation}

A double $3$-threshold sequence is in one of the two following cases:

\begin{itemize}
 \item When $s_{n}\in\{3n, 3n+1\}$ and $s_{n-1}$ is a standard value, the double $3$-threshold sequence of length $n$ is made of
 a subsequence $(s_1 s_2 \ldots s_{n-1})$ satisfying Remark \ref{rem:ajplus1}, followed either by $3n$ or by $3n+1$. There
 are $2a_n$ such sequences.
 \item When $s_n=3n+1$ and $s_{n-1}=3n$, two situations are possible for the sequence $(s_1 s_2 \ldots s_{n-2})$. If $s_{n-2}$
 is a standard value, by Remark \ref{rem:ajplus1} there are $a_{n-1}$ simple $3$-threshold sequences $(s_1 s_2 \ldots s_{n-2})$,
 and thus the same number of double $3$-threshold sequences with the desired properties. If $s_{n-2}$ is a high value, then
 we consider as before the minimum value of $h$, $0\leq h\leq n-3$, such that $s_{h+1}, \ldots, s_{n-2}$ are high values.
 The  double $3$-threshold sequence of length $n$ is made of
 a subsequence $(s_1 s_2 \ldots s_{h})$ satisfying Remark \ref{rem:ajplus1}, followed by a simple $3$-threshold sequence 
 $(s_{h+1} s_{h+2}$ $\ldots$ $s_{n-1})$ with offset 3, and further
 followed by $3n+1$. For a fixed $h$, the number of such sequences is $a_{h+1}a_{n-1-h}$.
 \end{itemize}
 
 In conclusion, we have:
 
 \begin{equation}
 b_{n}=2a_{n}+a_{n-1}+\sum\limits_{h=0}^{n-3}a_{h+1}a_{n-1-h}=2a_{n}+\sum\limits_{h=1}^{n-1}a_{h}a_{n-h}=\sum\limits_{h=0}^{n}a_{h}a_{n-h}
 \label{eq:bnfctan}
\end{equation}
 
\subsection{Implicit bijections}
 
\begin{proposition}
Simple 3-threshold sequences of length $n$ are in bijection with ternary trees with $n$ internal nodes. 
\label{prop:simple}
\end{proposition}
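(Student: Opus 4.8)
The plan is to prove the numerical identity $a_n = T_n$ for every $n \ge 0$, which establishes the claimed (implicit) bijection since $T_n = C^{(3)}_n$ counts ternary trees with $n$ internal nodes. The strategy is to show that $a_n$ obeys the same recurrence and the same initial condition as $C^{(3)}_n$, namely the Fuss--Catalan recurrence (\ref{eq:Tnrec}) specialized to $k=3$.

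First I would combine the two ``right-to-left'' identities (\ref{eq:anfctbn}) and (\ref{eq:bnfctan}). Concretely, I would eliminate $b$ from (\ref{eq:anfctbn}) by substituting the expression for $b_{n-1-h}$ supplied by (\ref{eq:bnfctan}). Writing $b_m = \sum_{i=0}^{m} a_i a_{m-i}$ and plugging $m = n-1-h$ into (\ref{eq:anfctbn}) yields the double sum $a_n = \sum_{h=0}^{n-1}\sum_{i=0}^{n-1-h} a_h\, a_i\, a_{n-1-h-i}$, a triple convolution of the sequence $(a_m)$.

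The key step is then the reindexing. Setting $j_1 = h$, $j_2 = i$ and $j_3 = n-1-h-i$, the ranges $0 \le h \le n-1$ and $0 \le i \le n-1-h$ sweep out exactly the set of nonnegative triples $(j_1,j_2,j_3)$ with $j_1+j_2+j_3 = n-1$, each coordinate lying in $[0,n-1]$; indeed, a prescribed such triple is realized precisely once, by $h=j_1$ and $i=j_2$. Hence $a_n = \sum_{j_1+j_2+j_3=n-1} a_{j_1}a_{j_2}a_{j_3}$, which is precisely (\ref{eq:Tnrec}) for $k=3$ with $a_n$ in the role of $C^{(3)}_n$.

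Finally I would close the argument by induction on $n$. The base case is $a_0 = 1 = C^{(3)}_0$, the former holding by the stated convention. Assuming $a_m = C^{(3)}_m$ for all $m < n$, the recurrence just derived and (\ref{eq:Tnrec}) have equal right-hand sides, since every index $j_\ell$ appearing is at most $n-1$; therefore $a_n = C^{(3)}_n = T_n$. I expect the main obstacle to be the bookkeeping in the reindexing step---verifying that the nested summation bounds assemble into the full simplex of nonnegative triples summing to $n-1$ with neither over- nor under-counting---rather than anything conceptually deep; the two right-to-left counts (\ref{eq:anfctbn}) and (\ref{eq:bnfctan}) are engineered precisely so that their composition reproduces the ternary-tree recurrence.
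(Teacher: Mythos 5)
Your proof is correct, and there is no circularity: both identities you invoke, Equations (\ref{eq:anfctbn}) and (\ref{eq:bnfctan}), are established in Section \ref{sec:three} by direct counting, before the proposition. The overall strategy coincides with the paper's (an implicit bijection obtained by showing that $a_n$ satisfies the $k=3$ instance of the Fuss--Catalan recurrence (\ref{eq:Tnrec}) and then matching initial conditions), but you combine a different pair of the available identities, and this buys a real simplification. The paper starts from the left-to-right recurrence (\ref{eq:anmixte}), $a_n=3a_{n-1}+\sum_{h=1}^{n-2}(a_h+b_h)a_{n-h-1}$, substitutes (\ref{eq:bnfctan}) for $b_h$, and then needs a chain of splittings and reindexings (absorbing the $3a_{n-1}$ term into boundary cases of the double sum) before the triple convolution emerges. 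You instead start from the right-to-left identity (\ref{eq:anfctbn}), $a_n=\sum_{h=0}^{n-1}a_h b_{n-1-h}$, and substitute (\ref{eq:bnfctan}) for $b_{n-1-h}$; since both identities are already in convolution form, the simplex sum
$a_n=\sum_{j_1+j_2+j_3=n-1}a_{j_1}a_{j_2}a_{j_3}$
appears after a single substitution, with only the (correctly handled) bookkeeping that $(h,i)\mapsto(h,i,n-1-h-i)$ enumerates each nonnegative triple exactly once. One point worth making explicit in your write-up: your substitution uses (\ref{eq:bnfctan}) at index $m=n-1-h=0$ (when $h=n-1$), which is not covered by the paper's counting derivation but holds trivially under the convention $a_0=b_0=1$, since $b_0=1=a_0a_0$. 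With that remark added, your argument is a cleaner derivation of the same recurrence, and the concluding induction from $a_0=T_0=1$ is exactly as in the paper.
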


\begin{proof}
 We use Equations (\ref{eq:anmixte}) and (\ref{eq:bnfctan}) to deduce that $a_n$ satisfies the same recurrence
 relation as $T_n$ (see Equation (\ref{eq:Tnrec}), where $C_n^{(3)}=T_n$).
 
\begin{align*}
  a_n& =3a_{n-1}+\sum \limits_{h=1}^{n-2}(a_h+b_h)a_{n-h-1}\\
    &=3a_{n-1}+\sum \limits_{h=1}^{n-2}a_ha_{n-h-1}+\sum \limits_{h=1}^{n-2}b_ha_{n-h-1}\\
    &=3a_{n-1}+\sum \limits_{h=1}^{n-2}a_ha_{n-h-1}+\sum \limits_{h=1}^{n-2}(\sum\limits_{j=0}^{h}a_ja_{h-j})a_{n-h-1}\\
    &=a_{n-1}+
    \sum \limits_{j=0}^{n-1}a_ja_{n-j-1}a_{n-(n-1)-1}+\sum \limits_{h=1}^{n-2}\sum\limits_{j=0}^{h}a_ja_{h-j}a_{n-h-1}\\
    &=a_{n-1}+\sum \limits_{h=1}^{n-1}\sum\limits_{j=0}^{h}a_ja_{h-j}a_{n-h-1}
    =\sum \limits_{h=0}^{n-1}\sum\limits_{j=0}^{h}a_ja_{h-j}a_{n-h-1}\\
    &=\sum \limits_{j=0}^{n-1}\sum\limits_{h=j}^{n-1}a_ja_{h-j}a_{n-h-1}
    = \sum \limits_{j=0}^{n-1}\sum\limits_{i=0}^{n-1-j}a_ja_{i}a_{n-j-i-1}
    =\sum_{\substack{j+i+h=n-1\\ 0\leq j,i,h\leq n-1}}^{}a_ja_ia_h
\end{align*}

Since $a_0=T_0=1$ and $a_n,T_n$ satisfy the same recurrence relation, we deduce that $a_n=T_n$ 
for $n\geq 1$. The conclusion follows.
\end{proof}

 \begin{proposition}
Double 3-threshold sequences of length $n$ are in bijection with ordered pairs of ternary trees whose total number of internal nodes
is $n$. 
\label{prop:double}
\end{proposition}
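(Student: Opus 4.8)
The plan is to assemble the two ingredients that the previous subsections already provide, in exact parallel with the proof of Proposition \ref{prop:simple}. The substantive combinatorial work is in fact already done: Equation (\ref{eq:bnfctan}) established, by the right-to-left count, the closed convolution $b_n=\sum_{h=0}^{n}a_h a_{n-h}$, and Proposition \ref{prop:simple} identified $a_h=T_h$ for all $h\geq 0$ (using $a_0=T_0=1$). So the first step is simply to substitute $a_h=T_h$ into Equation (\ref{eq:bnfctan}), yielding $b_n=\sum_{h=0}^{n}T_h T_{n-h}$.

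The second step is to interpret this convolution combinatorially. The term $T_h T_{n-h}$ counts the ways to pick a ternary tree with $h$ internal nodes together with a ternary tree with $n-h$ internal nodes; summing over all splits $h$ with $0\leq h\leq n$ counts precisely the ordered pairs of ternary trees whose total number of internal nodes is $n$. By Remark \ref{rem:Raneycountsktrees} with $k=3$ and $r=2$ (equivalently, by specializing Equation (\ref{eq:RaneyRecCatalan}) to $C^{(3)}_{i_h}=T_{i_h}$), this quantity is exactly $U_n=R_n^{(3,2)}$. Hence $b_n=U_n$.

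It then remains only to conclude. Since the set of double $3$-threshold sequences of length $n$ and the set of ordered pairs of ternary trees with a total of $n$ internal nodes are both finite and have the common cardinality $b_n=U_n$, they are equinumerous, and the Proposition follows. As in Proposition \ref{prop:simple}, this is an \emph{implicit} (counting) bijection rather than an explicit map, which is consistent with the title of this subsection; the explicit correspondence for general $k$ and $l$ is deferred to Section \ref{sec:explainbijections}. The only points requiring minor care are checking the base value $a_0=T_0=1$ so that the substitution into the convolution is valid at the endpoints $h=0$ and $h=n$, and confirming that the summation range of Equation (\ref{eq:bnfctan}) matches the compositions $i_1+i_2=n$ in Equation (\ref{eq:RaneyRecCatalan}); neither presents a genuine obstacle, since the hard enumerative identities have already been proved.
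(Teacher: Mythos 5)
Your proposal is correct and takes essentially the same approach as the paper: both arguments combine the convolution $b_n=\sum_{h=0}^{n}a_h a_{n-h}$ from Equation (\ref{eq:bnfctan}) with Proposition \ref{prop:simple} to reinterpret the count as ordered pairs of ternary trees. The only cosmetic difference is that the paper reads $a_h a_{n-h}$ as pairs of simple $3$-threshold sequences and then applies the correspondence with ternary trees, while you substitute $a_h=T_h$ first and conclude by equinumerosity; the underlying reasoning is identical.
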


\begin{proof}
By Equation (\ref{eq:bnfctan}), we have that  $b_{n}=\sum\limits_{h=0}^{n}a_{h}a_{n-h}$. For each $h$, the term $a_ha_{n-h}$
counts the ordered pairs of simple $3$-threshold sequences where the first (second) sequence in the pair is any
simple $3$-threshold sequence of length $h$ ($n-h$). But, by Proposition \ref{prop:simple},
simple $3$-threshold sequences of length $w$ are in a 1-to-1 correspondence with ternary trees with $w$ internal nodes, so
the conclusion follows.
\end{proof}

The two previous propositions imply that:

\begin{corollary}
For each $n\geq 1$, $a_n=T_n=\frac{1}{2n+1}\binom{3n}{n}$ and $b_n=U_n=\frac{1}{n+1}\binom{3n+1}{n}$.
\label{cor:anTn}
\end{corollary}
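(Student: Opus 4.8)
The final statement to prove is Corollary~\ref{cor:anTn}, which asserts the explicit formulas $a_n=T_n=\frac{1}{2n+1}\binom{3n}{n}$ and $b_n=U_n=\frac{1}{n+1}\binom{3n+1}{n}$ for every $n\geq 1$.

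The plan is to derive both equalities directly from the two propositions that immediately precede the corollary, rather than re-deriving anything from scratch. First I would invoke Proposition~\ref{prop:simple}: since simple $3$-threshold sequences of length $n$ are in bijection with ternary trees with $n$ internal nodes, and ternary trees with $n$ internal nodes are counted by $T_n=C^{(3)}_n=\frac{1}{2n+1}\binom{3n}{n}$ by the closed form in Equation~(\ref{eq:generalizedCatalan}) with $k=3$, we get $a_n=T_n$ at once. Then I would invoke Proposition~\ref{prop:double}: double $3$-threshold sequences of length $n$ are in bijection with ordered pairs of ternary trees whose total number of internal nodes is $n$, and such ordered pairs are counted by the Raney number $R_n^{(3,2)}=U_n=\frac{1}{n+1}\binom{3n+1}{n}$, as recorded for the $k=3,\,r=2$ case in the Introduction (and as follows from Remark~\ref{rem:Raneycountsktrees}). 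Hence $b_n=U_n$.

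The bulk of the work is already carried by the bijections; the corollary is essentially the bookkeeping step that attaches the closed-form evaluation to the two combinatorial identifications. Concretely I would just chain $a_n \stackrel{\text{Prop.~\ref{prop:simple}}}{=} (\text{number of ternary trees}) \stackrel{(\ref{eq:generalizedCatalan})}{=} \frac{1}{2n+1}\binom{3n}{n}$ and $b_n \stackrel{\text{Prop.~\ref{prop:double}}}{=} (\text{number of ordered pairs of ternary trees}) = \frac{1}{n+1}\binom{3n+1}{n}$, citing the explicit value $R_n^{(3,2)}=\frac{1}{n+1}\binom{3n+1}{n}$ computed in the Introduction.

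I do not anticipate a genuine obstacle here, since no new recurrence or induction is required: every ingredient is already established. The only point deserving a moment of care is making sure the counting constants line up—namely that the Fuss–Catalan evaluation applies with $k=3$ (so the denominator is $(k-1)n+1=2n+1$) and that the ordered-pair count matches the Raney number $R_n^{(3,2)}$ with the simplification $\frac{2}{2n+2}\binom{3n+1}{n}=\frac{1}{n+1}\binom{3n+1}{n}$ already performed in the Introduction. With those two constants correctly identified, the corollary follows immediately.
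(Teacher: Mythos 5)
Your proposal is correct and matches the paper's treatment: the paper derives the corollary directly from Propositions~\ref{prop:simple} and~\ref{prop:double} combined with the known counts $T_n=C^{(3)}_n$ and $U_n=R_n^{(3,2)}$ recorded in the Introduction, which is exactly your chain of reasoning. The only difference is presentational—the paper states the implication in one line, while you spell out the bookkeeping of the constants.
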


\begin{proposition}
 Proper double $3$-threshold sequences are in bijection with ordered 4-tuples of ternary trees with total number of internal nodes equal to 
 $n-1$. Moreover, we have

 \begin{equation}
b_{n}-a_n=\sum\limits_{h=0}^{n-1}a_{h}a_{n-h}=\sum\limits_{h=0}^{n-1}b_{h}b_{n-h-1}=U_n-T_n=\frac{2}{n+1}\binom{3n}{n-1}.
\end{equation}

 \label{prop:properdouble}
\end{proposition}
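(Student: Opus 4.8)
The plan is to first translate ``proper double $3$-threshold sequence'' into the counts already in hand, and then to split the statement into an equinumerosity (the bijection) plus a short chain of algebraic identities. A double $3$-threshold sequence of length $n$ has $s_n\in\{3n,3n+1\}$, and it is simple precisely when $s_n=3n$; hence the proper double ones are exactly those with $s_n=3n+1$, and their number is $b_n-a_n$. Starting from Equation (\ref{eq:bnfctan}), which reads $b_n=\sum_{h=0}^{n}a_ha_{n-h}$, I would peel off the top term $a_na_0=a_n$ to get immediately $b_n-a_n=\sum_{h=0}^{n-1}a_ha_{n-h}$, which is the first equality.

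For the bijection and the expression $\sum_{h=0}^{n-1}b_hb_{n-h-1}$ I would argue combinatorially. By Proposition \ref{prop:double}, each $b_m$ counts ordered pairs of ternary trees with a total of $m$ internal nodes, so $b_hb_{n-h-1}$ counts ordered $4$-tuples of ternary trees whose first two trees carry $h$ internal nodes in total and whose last two carry $n-h-1$, that is, ordered $4$-tuples with exactly $n-1$ internal nodes in which the first pair holds $h$ of them. Letting $h$ range over $0,\ldots,n-1$ sweeps out all such $4$-tuples, so $\sum_{h=0}^{n-1}b_hb_{n-h-1}$ is the number of ordered $4$-tuples of ternary trees with $n-1$ internal nodes, which by Remark \ref{rem:Raneycountsktrees} equals $R_{n-1}^{(3,4)}$. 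Showing that this quantity coincides with the count $b_n-a_n$ of proper double sequences simultaneously yields the asserted (implicit) bijection and the third equality.

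The one genuinely non-obvious step is the middle equality $\sum_{h=0}^{n-1}a_ha_{n-h}=\sum_{h=0}^{n-1}b_hb_{n-h-1}$, and it is the main obstacle: the two convolutions cannot be matched term by term, since on the left the indices add up to $n$ while on the right they add up to $n-1$. The cleanest bridge I would use is generating functions. Let $A(x)=\sum_{n\geq 0}a_nx^n$; since $a_n=T_n$ by Corollary \ref{cor:anTn} and a ternary tree is either trivial or a root with three ordered ternary subtrees, $A$ satisfies $A=1+xA^3$. Multiplying by $A$ gives $A^2=A+xA^4$. Now $A^2$ is the generating function of the $b_n$ (ordered pairs of ternary trees) and $A^4$ that of the $R_n^{(3,4)}$ (ordered $4$-tuples), both by Remark \ref{rem:Raneycountsktrees} and Corollary \ref{cor:anTn}; comparing coefficients of $x^n$ therefore yields $b_n-a_n=R_{n-1}^{(3,4)}$ in one stroke, which is exactly the desired equality together with the bijection.

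It remains to identify the common value with the closed form. From (\ref{eq:Raney}) I would compute directly $R_{n-1}^{(3,4)}=\frac{4}{2n+2}\binom{3n}{n-1}=\frac{2}{n+1}\binom{3n}{n-1}$, and cross-check $U_n-T_n$ using Pascal's rule $\binom{3n+1}{n}=\binom{3n}{n}+\binom{3n}{n-1}$ together with the ratio $\binom{3n}{n-1}=\frac{n}{2n+1}\binom{3n}{n}$; a one-line cancellation then gives $U_n-T_n=\frac{2}{n+1}\binom{3n}{n-1}$ as well. Since $\sum_{h=0}^{n-1}a_ha_{n-h}=b_n-a_n=U_n-T_n$ and $\sum_{h=0}^{n-1}b_hb_{n-h-1}=R_{n-1}^{(3,4)}$, all the displayed quantities agree, completing the argument.
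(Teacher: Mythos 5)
Your proof is correct, and for the pivotal middle equality it takes a genuinely different route from the paper. Both arguments obtain $b_n-a_n=\sum_{h=0}^{n-1}a_ha_{n-h}$ the same way (dropping the $h=n$ term from Equation (\ref{eq:bnfctan})), and both end with the same four-tuple interpretation and closed-form computations. The difference is in how $b_n-a_n=\sum_{h=0}^{n-1}b_hb_{n-h-1}$ is established. The paper stays inside its recurrence framework: it substitutes Equation (\ref{eq:anfctbn}), rewritten as $\sum_{h=1}^{n-2}a_hb_{n-1-h}=a_n-b_{n-1}-a_{n-1}$, into Equation (\ref{eq:bnmixte}) and collects terms to reach $b_n=a_n+\sum_{h=0}^{n-1}b_hb_{n-1-h}$, so the identity is forced by the two independent counts (left-to-right and right-to-left) of double sequences, with Corollary \ref{cor:anTn} entering only at the end for the tree interpretation and the closed form. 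You instead invoke Corollary \ref{cor:anTn} up front to identify $A(x)=\sum_{n\geq 0} a_nx^n$ with the ternary-tree generating function, use $A=1+xA^3$, and multiply by $A$ to get $A^2=A+xA^4$, whence $b_n-a_n=R_{n-1}^{(3,4)}=\sum_{h=0}^{n-1}b_hb_{n-1-h}$ by comparing coefficients. Your route is shorter and makes the bijective content transparent --- the difference $b_n-a_n$ is literally exhibited as the coefficient counting ordered $4$-tuples --- and it generalizes: the same manipulation $F^r=F^{r-1}+xF^{k+r-1}$ for $F=1+xF^k$ is precisely the ``well known recurrence relation'' the paper appeals to later in Corollary \ref{cor:properseq}. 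What the paper's route buys is that the structural identity is derived from the threshold-sequence recurrences alone, without any generating-function machinery, keeping the section elementary and self-contained. One small remark: your closing Pascal-rule verification of $U_n-T_n=\frac{2}{n+1}\binom{3n}{n-1}$ is logically redundant, since that value already follows from $b_n-a_n=R_{n-1}^{(3,4)}$ together with Corollary \ref{cor:anTn}, but it is harmless as a cross-check.
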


\begin{proof}
%
%
%

Using Equations (\ref{eq:bnmixte}) and (\ref{eq:anfctbn}), we obtain:

\begin{align*}
b_n&=3b_{n-1}+\sum\limits_{h=1}^{n-2}a_{h}b_{n-1-h}+\sum\limits_{h=1}^{n-2}b_{h}b_{n-1-h}+a_{n-1}\\
&=3b_{n-1}+(a_n-b_{n-1}-a_{n-1})+\sum\limits_{h=1}^{n-2}b_{h}b_{n-1-h}+a_{n-1}\\
&=a_n+2b_{n-1}+\sum\limits_{h=1}^{n-2}b_{h}b_{n-1-h}\\
&=a_n+\sum\limits_{h=0}^{n-1}b_{h}b_{n-1-h}
\end{align*}

\noindent which implies together with Equation (\ref{eq:bnfctan}) that:

\begin{equation}
b_{n}-a_n=\sum\limits_{h=0}^{n-1}b_{h}b_{n-h-1}=\sum\limits_{h=0}^{n-1}a_{h}a_{n-h}
\end{equation}

Now, $\sum\limits_{h=0}^{n-1}b_{h}b_{n-h-1}=\sum\limits_{h=0}^{n-1}U_{h}U_{n-h-1}$ and thus counts, for $n\geq 1$, 
the number of ordered 4-tuples of ternary trees with total number of internal nodes equal to $n-1$. 
This follows from the interpretation of $U_k$, $k\geq 0$, which counts the pairs of ternary trees with $k$ internal nodes. 
A simple computation further shows, using Corollary \ref{cor:anTn}, that $b_n-a_n=U_n-T_n=\frac{2}{n+1}\binom{3n}{n-1}$.
The conclusion follows. \end{proof}

\begin{remark}
The sequence corresponding to the number of ordered 4-tuples of ternary
trees is known as OEIS A006629 \cite{oeis}.
\end{remark}

\section{Explicit bijections}\label{sec:explainbijections}

We start this section by defining a labeling for the $k$-ary trees. Note that a $k$-ary tree with $n$ internal nodes has
$nk+1$ nodes.

Let $w$ be an integer. A {\em $k$-ary $w$-tree} is a $k$-ary tree $A$ whose nodes are labeled such that a breadth first 
traversal of $A$ yields the list of nodes $w, w-1, \ldots, w-nk$, where $n$ is the number of internal nodes of $A$. 
Equivalently, the root of $A$ is $w$, and the nodes on each level are labeled in decreasing order
from left to right, starting with the largest value not used on the previous level.

\begin{example}
 The trees in Figure \ref{fig:4arbres} are respectively a quaternary $16$-tree, a quaternary $18$-tree and a quaternary $9$-tree.
\end{example}

\begin{remark}
 Two $k$-ary $w$-trees are equal if and only if the sets of their internal nodes, seen as labeled nodes, are equal.
 \label{rem:equalwtrees}
\end{remark}

\begin{remark}
In each $k$-ary $w$-tree with $j$ internal nodes, the smallest label $x$ of a node satisfies 
$x=w-jk$.
\label{rem:smallestx}
\end{remark}

Let  $S=(s_1 s_2\ldots s_n)$ be a $(k,l)$-threshold sequence.  The {\em cut index} of $S$ is the largest 
index $i<n$ such that 

\begin{equation}
s_i<s_n-(n-i)k
\label{eq:cutting}
\end{equation}

\noindent if such an element exists,
and is equal to $0$ otherwise.  Intuitively, $s_i$ is the largest element in $S$ whose value is not large enough to 
be a label in the $k$-ary $s_n$-tree whose internal nodes have the labels $s_n, s_{n-1}, \ldots, s_{i+1}$ (see
also Remark \ref{rem:smallestx}).
For each $(k,l)$-threshold sequence $S$ of length $n$,
let  $Forest(S)$ be the set of trees, initially empty, defined as follows (see Figure \ref{fig:4arbres}, as well as Examples \ref{ex:exempleForest0} and \ref{ex:exempleForest}):

\begin{itemize}
 \item[(1)] Let $A_S^1$ be the  $k$-ary $s_n$-tree whose internal nodes ordered according to a breadth first traversal are the elements 
 $s_n, s_{n-1}, \ldots, s_{i+1}$ of $S$. Then $A_S^1$ belongs to $Forest(S)$ (discard the node labels, 
 they were needed only to define the tree).
 \item[(2)] If $i\neq 0$, let $Q=S[1,i]$ and add  $Forest(Q)$ to $Forest(S)$.
\end{itemize}

Assume $Forest(S)=\{A_S^1, A_S^2,\ldots, A_S^t\}$ where $1, 2, \ldots, t$ indicate the order of computation of the trees.
Let $Q_1=S$ and let $Q_2, \ldots, Q_{t}$ be the successive sequences computed in step (2) above, respectively 
generating $A_S^1, A_S^2,$ $\ldots, A_S^t$ using step (1).  We prove below that the unique index $l_p$ such that $Q_p$ is a proper 
$(k,l_p)$-threshold sequence, $1\leq p\leq t$, satisfies $l=l_1>l_2> \ldots > l_t$. Then define $Tuple(S)$ as the $(l+1)$-tuple containing 
$A_S^p$ in position $l_p+1$, $1\leq p\leq t$, and the trivial $k$-ary tree $\lambda$ on each of the remaining positions.

\begin{figure*}[t!]
    \begin{subfigure}[t]{0.36\textwidth}
        \centering
        \includegraphics[width=0.92\textwidth]{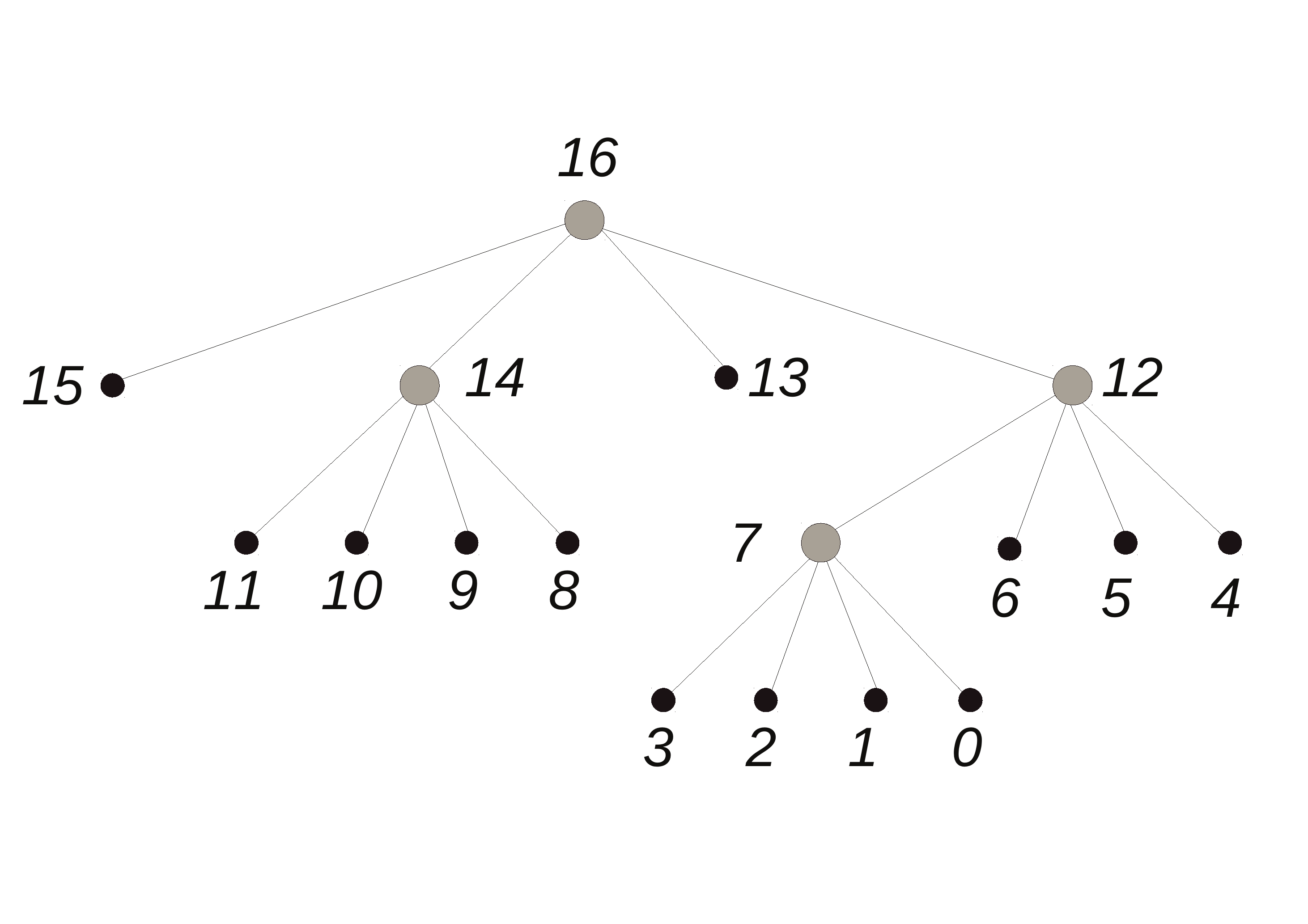}
        \caption{}
    \end{subfigure}\hfill
    \begin{subfigure}[t]{0.57\textwidth}
        \centering
        \vspace*{-5cm}
        \includegraphics[width=0.95\textwidth]{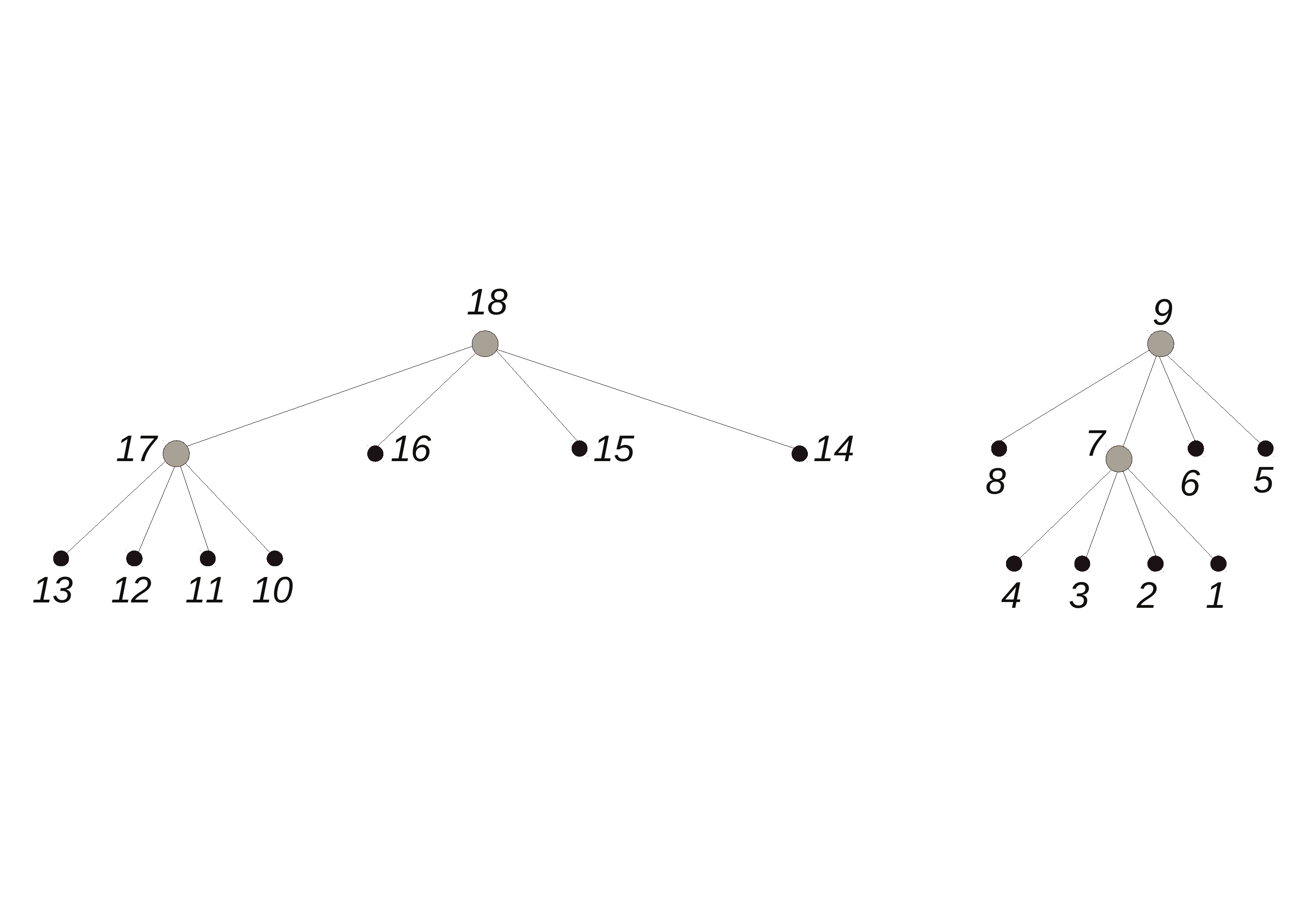}
        \vspace*{-1.5cm}
        \caption{}   
    \end{subfigure}
    \caption{ a) The quaternary tree $A_S^1$ computed for the $(4,0)$-threshold sequence $S=(7,12,14,16)$.
    b) The two quaternary trees $A_V^1$ (left), $A_V^2$ (right) computed for the $(4,2)$-threshold sequence $V=(7,9,17,18)$.}
    \label{fig:4arbres}
    \end{figure*}

\begin{example}
 For the proper $(4,0)$-threshold sequence $S=(7,12,14,16)$ of length $4$, also denoted by $Q_1$, none of the indices satisfies the definition of a cut index, thus
 $Forest(S)=\{A_S^1\}$ and the quaternary tree $A_S^1$ is depicted in Figure 
 \ref{fig:4arbres}(a). The $(l+1)$-tuple associated with $S$ is a 1-tuple, represented by $A_S^1$ alone. When $S$ is seen as a $(4,2)$-threshold
 sequence, the $(l+1)$-tuple associated with $S$ is the 3-tuple $(A_S^1,\lambda,\lambda)$. Indeed,  $16=4\cdot 4+0$ thus 
 for the sequence $V(=Q_1)$ we have $l_1=0$.
  \label{ex:exempleForest0}
 \end{example}
 
 \begin{example}
 For the proper $(4,2)$-threshold sequence $V=(7,9,17,18)$, also denoted by $Q_1$, we have $s_3=17$. Then $s_3$ does not satisfy the condition 
 (\ref{eq:cutting}), since  $17\not <18-(4-3)4=14$, so the index $3$ is not 
 the cut index of $V$.
 But $s_2=9$ and $9<18-(4-2)4=10$, so that $9$ does not belong to the tree with internal nodes 18 and 17, denoted by $A_V^1$ and
 depicted in Figure \ref{fig:4arbres}(b), left. Therefore $i=2$ is
 the cut index of $V$, and $9$ is the root of the next tree. The next tree, called $A_V^2$ is thus computed in the same way using
 the $(4,2)$-threshold sequence $Q_2=V[1,i]=(7,9)$, which has no cut index. Then $A_V^2$ is the tree in Figure \ref{fig:4arbres}(b), right.
 We deduce that $Forest(V)=\{A_V^1,A_V^2\}$.
 The $(l+1)$-tuple associated with $V$ is the 3-tuple $(\lambda,A_V^2,A_V^1)$  
 since $18=4\cdot 4+2$ (thus $l_1=2$ for $Q_1=V$) and $9=2\cdot 4 +1$ (thus $l_2=1$ for $Q_2=V[1,2]=(7,9)$).
 \label{ex:exempleForest}
\end{example}

\begin{theorem}
 Let $n,k,l$ be three integers with $n\geq 1$, $k\geq 2$ and $0\leq l\leq k-2$. The function $Tuple(S)$ is well-defined for the
 pair $(k,l)$, and is a 
 bijection between $(k,l)$-threshold sequences of length $n$ and $(l+1)$-tuples of $k$-ary trees with total number of $n$ internal vertices.
 \label{thm:main}
\end{theorem}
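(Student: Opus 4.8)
The plan is to establish the theorem in three stages: first verify well-definedness, then construct an explicit inverse to prove bijectivity, and finally confirm the tree-count bookkeeping. To show $Tuple(S)$ is well-defined I must verify the claimed chain of inequalities $l = l_1 > l_2 > \cdots > l_t$ on the proper-indices of the successive sequences $Q_1, Q_2, \ldots, Q_t$. The key point is that each $Q_p$ is, by the recursive construction, a genuine $(k,l_p)$-threshold sequence (after normalizing its offset), and the cut operation strictly decreases the proper index: when we peel off $A_S^p$ by taking the elements $s_n, \ldots, s_{i+1}$ forming a $k$-ary $s_n$-tree, the remaining prefix $Q = S[1,i]$ has last element $s_i$ strictly smaller than what a proper $(k,l_p)$-sequence of that length would require, and I would compute its proper index $l_{p+1}$ from the residue of $s_i$ modulo $k$ relative to its position. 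Crucially each $l_p \le k-2$ and they are strictly decreasing nonnegative integers, so $t \le l+1$, which is exactly what is needed for the positions $l_p + 1$ to be distinct slots in an $(l+1)$-tuple; filling the rest with $\lambda$ then produces a well-formed $(l+1)$-tuple.

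Next I would construct the inverse map explicitly. Given an $(l+1)$-tuple $(B_0, B_1, \ldots, B_l)$ of $k$-ary trees, I read off which positions hold non-trivial trees; say these are positions $l_t + 1 < \cdots < l_1 + 1$ with $l_1 = l$ (the top position must be non-trivial, or more precisely governs the proper index $l$). Each non-trivial tree $B_{l_p}$ with $m_p$ internal nodes gets re-labeled as a $k$-ary $w_p$-tree for the appropriate root value $w_p$ determined by the running length and the offset, and its internal nodes read in breadth-first order supply a contiguous block $s_{i+1}, \ldots$ of the sequence in reverse; concatenating these blocks from the lowest tree up to the top reconstructs $S$. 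I would verify that the resulting sequence is strictly increasing and satisfies the threshold constraint $ki \le s_i \le kn+l$, using Remark \ref{rem:smallestx} (the smallest label in a $w$-tree with $j$ internal nodes is $w - jk$) to check the lower bounds at the junctions between consecutive trees, and then confirm that applying $Tuple$ to this $S$ recovers the original tuple and vice versa.

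The main obstacle I anticipate is the junction analysis: proving that the cut index is recovered correctly and that the inequalities align exactly at the boundaries between $A_S^p$ and $A_S^{p+1}$. Specifically, I must show that $s_i$ (the root of the next tree, at the cut) is forced to be a \emph{standard} value for its position, so that $A_S^{p+1}$ is itself a proper threshold sequence's tree with a strictly smaller proper index, and simultaneously that no earlier index could have been the cut index — this is where the ``largest index $i<n$'' in the definition of cut index and the defining inequality \eqref{eq:cutting} do the real work. I would handle this by a careful induction on $t$ (equivalently on the length $n$), treating the base case $t=1$ (no cut index, so $S$ maps to a single tree, already essentially Proposition \ref{prop:simple}'s content generalized to arbitrary $k$) and then peeling one tree at a time, invoking the inductive hypothesis on the strictly shorter prefix $Q = S[1,i]$. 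Once well-definedness and the mutually-inverse property are in place, the total internal-node count is immediate: the $t$ trees have $n$ internal nodes in aggregate because their breadth-first node-blocks partition $\{s_1, \ldots, s_n\}$, and by Remark \ref{rem:Raneycountsktrees} the count matches $R_n^{(k,l+1)}$, completing the proof.
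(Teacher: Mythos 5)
Your proposal follows essentially the same route as the paper: your strict-decrease argument for the proper indices $l_p$ (via the cut-index inequality $s_i < ik + l_p$) is precisely the paper's Affirmations (A) and (B), and your explicit inverse --- relabelling each non-trivial tree as a $k$-ary $w_p$-tree with root determined by the running internal-node count and the tree's position, concatenating the breadth-first node blocks, and checking at the junctions that the cut indices are recovered --- is exactly the paper's injectivity and surjectivity argument. The only slip is your parenthetical claim that the top position of the tuple must be non-trivial (i.e.\ $l_1 = l$): non-proper $(k,l)$-threshold sequences map to tuples whose last entry is the trivial tree $\lambda$, so the inverse must accept such tuples as well, which your position-based formula for the root labels $w_p$ in fact already handles once that assumption is dropped.
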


\begin{proof}
We first prove two affirmations, named (A) and (B):
\medskip

(A) For any $(k,l)$-threshold sequence $S$ of length $n$, the number of trees in $Forest(S)$ is upper bounded by $l+1$.
\medskip

When $A_S^1$ is built, the value $s_i$ given by the cut index $i$ of $S$ is not a node of $A_S^1$. Indeed,
$s_i$ is the $(n-i+1)$-th element of $S$ in decreasing order of the indeces, thus $A_S^1$ is built on $(n-i)$ internal nodes and
by Remark \ref{rem:smallestx} its smallest label is $s_n-(n-i)k$. Condition (\ref{eq:cutting}) then ensures that
$s_i$ is not a node label from $A_S^1$.  Then, either $A_S^1$ contains all the elements in $S$ (this is the case $i=0$), or
$S[1,i]$ must be used to complete the set $Forest(S)$ (this is the case $i\geq 1$). 

We use induction on $l$ to show Affirmation (A). When $l=0$, $s_n=kn$ and for each $r<n$ we have $s_r\geq kr=kn-(n-r)k=s_n-(n-r)k$, 
thus $s_r$ belongs to the $k$-ary $s_n$-tree with internal nodes $s_n, s_{n-1}, \ldots, s_{r+1}$.
We deduce that $A_S^1$ contains all the elements in $S$ and the conclusion follows.

Assume now that Affirmation (A) is true for all $(k,l')$-threshold sequences $Q$ with $l'<l$. Assume moreover that $S$ is a 
proper $(k,l)$-threshold sequence, {\em i.e.} $s_n=kn+l$, otherwise the
conclusion follows by inductive hypothesis.
Let $i$ be the cut index of $S$, denote by $Q=S[1,i]$ and note that $q_i=s_i< s_n-(n-i)k=kn+l-nk+ik=ik+l$, thus  
$q_i\leq ik+l-1$. This means $Q$ is a proper $(k,l_Q)$-threshold sequence of length $i$, for some  $l_Q\leq l-1$. Then, using the inductive hypothesis
for $Q$, we deduce that the number of trees in $Forest(Q)$ is upper bounded by $l_Q+1$, thus by $l$, and therefore the number of trees in 
$Forest(S)$ is upper bounded by $l+1$.
\medskip 

(B) With the notation $Q_1=S, Q_2, \ldots, Q_{t}$ for the successive sequences respectively generating $A_S^1,$ $A_S^2,$ $\ldots, A_S^t$,
and assuming each $Q_p$ is a proper $(k,l_p)$ sequence, the sequence 
$l_1, l_2, \ldots, l_t$ is a strictly decreasing sequence of integers.
\medskip

As proved above for $S$ and $Q=S[1,i]$, where $i$ is the cut index of $S$, we have $q_i=s_i< ik+l$ meaning that
$q_i=ik+l'$ with $l'<l$. The same reasoning may be applied to each pair $Q_i, Q_{i+1}$ and Affirmation (B) follows.
\medskip

By Affirmations (A) and (B), the function $Tuple(S)$ is well defined, since no pair of trees $A_S^p, A_S^r$ is affected to the same position, and 
there are enough positions in the $(l+1)$-tuple to contain all the trees in $Forest(S)$.
\medskip

We show that $Tuple(S)$ is a bijection.
\medskip

{\bf Injectivity.} Assume that $Tuple(S)=Tuple(V)$ for two $(k,l)$-threshold sequences $S$ and $V$. Denote by $y_1, \ldots, y_t$ from {\em right}
to
{\em left} the  positions of the non-trivial trees, which are the same in both tuples. Then by the definition of the function $Tuple$
and the equation $Tuple(S)=Tuple(V)$, we have that $A_S^p=Tuple(S)[y_p]=Tuple(V)[y_p]=A_V^p$ for $1\leq p\leq t$. Recall that all the trees in the tuples
are unlabeled $k$-ary trees.

%
Let $r_{p-1}$ be the number of internal nodes in
$A_S^1, \ldots, A_S^{p-1}$. By the definition of the function $Tuple$, and using the same notations $Q_i$ as above with respect to $S$,
during the construction of $Tuple(S)$ the root of $A_S^p$ has the label $w_p=k(n-r_{p-1})+l_p$. The same reasoning for $V$,
assuming the sequences used to build $Tuple(V)$ are $Y_1, \ldots, Y_t$, imply that the root $A_V^p$ has the label 
$k(n-r_{p-1})+ l'_p$, where $Y_p$ is a proper $(k,l'_p)$-sequence.  But $A_S^p$ and $A_V^p$ are placed on the same position 
in $Tuple(S)$ and $Tuple(V)$ respectively, meaning that $l_p=l'_p=y_p-1$, and thus the roots of $A_S^p$ and $A_V^p$ have
the same label $w_p$. Then $A_S^p$ and $A_V^p$ are not only identical
when seen as $k$-ary trees, but also when seen as $k$-ary $w$-trees (see Remark \ref{rem:equalwtrees}).

Then let $I^p$ be the  increasing sequence of the internal nodes of $A_S^p$, seen as a $k$-ary $w_p$-tree. Then $S=I_tI_{t-1}\ldots I_1$.
The sequence $V$ is computed similarly using the same increasing sequences, since the labeled trees are identical. Thus $S=V$.
\medskip

{\bf Surjectivity.} Let $A=(A_1, \ldots, A_{l+1})$ be a $(l+1)$-tuple of $k$-ary trees, and assume that $A_{y_1}, A_{y_2}, \ldots, A_{y_t}$
are the non-trivial $k$-ary trees, with $y_1>y_2>\ldots>y_t$. Let
$r_{p}$ be the total number of internal nodes in $A_{y_1}, \ldots, A_{y_p}$. We label $A_{y_p}$ as a $k$-ary $w_p$-tree
with $w_p=k(n-r_{p-1})+ y_p-1$, and we denote by $I_p$ the increasing sequence of its internal nodes. Then 
the length of $I_p$ is $|I_p|=r_{p}-r_{p-1}$.

Define $S=I_t\ldots I_1$ and notice that the number $n$ of elements in $S$ is the total number $n$ of internal vertices 
in all the trees of $A$. Also denote by $Q_p=I_t\ldots I_{p}$ for $1\leq p\leq t$. Then the rightmost element of $Q_p$ is the root of 
$A_{y_p}$, that is $w_p$,  and its length is:

$$|Q_p|=n-(|I_1|+\ldots +|I_{p-1}|)=n-(r_1-r_0+r_2-r_1+\ldots + r_{p-1}-r_{p-2})=n-r_{p-1}.$$ 

Then we also have $w_p=s_{|Q_p|}.$
We note that: 
\medskip
\begin{align*}
w_p&=k(n-r_{p-1})+ y_p-1=k(n-r_{p-2})-(r_{p-1}-r_{p-2})k+y_{p-1}-1+y_p-y_{p-1}\\
&=w_{p-1}-(r_{p-1}-r_{p-2})k+y_p-y_{p-1}\\
&< w_{p-1}-(r_{p-1}-r_{p-2})k=w_{p-1}-(|Q_{p-1}|-|Q_p|)k.
\end{align*}

Now, $w_p<w_{p-1}-(|Q_{p-1}|-|Q_p|)k$ is condition (\ref{eq:cutting}) applied to $Q_{p-1}$ and the index $|Q_p|$ of $Q_{p-1}$, 
given that $w_p=s_{|Q_p|}$. 
Moreover, by the definition of a $k$-ary $w_{p-1}$-tree, for any element $s_j$ situated between $w_{p-1}$ and $w_p$
Equation (\ref{eq:cutting}) cannot hold. Thus $|Q_p|$ is the cut index of $Q_{p-1}$, and thus $A=Tuple(S)$.
\end{proof}

\begin{corollary}
 The number of $(k,l)$-threshold sequences of length $n$ is equal to the Raney number:
 
 $$R_n^{(k,l+1)}=\frac{l+1}{kn+l+1}\binom{kn+l+1}{n}. $$
 \label{cor:numberthresholdseq}
\end{corollary}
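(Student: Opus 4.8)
The plan is to obtain this as an immediate consequence of Theorem \ref{thm:main} combined with the tree-theoretic interpretation of the Raney numbers recorded in Remark \ref{rem:Raneycountsktrees}. First I would invoke Theorem \ref{thm:main}: for the admissible parameters $n\geq 1$, $k\geq 2$ and $0\leq l\leq k-2$, the map $Tuple(S)$ is a bijection between the set of $(k,l)$-threshold sequences of length $n$ and the set of ordered $(l+1)$-tuples of $k$-ary trees whose internal nodes total $n$. Hence these two finite sets have the same cardinality, and it suffices to count the latter.

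Next I would apply Remark \ref{rem:Raneycountsktrees} with the parameter $r=l+1$, which states that the number of ordered $(l+1)$-tuples of $k$-ary trees with a total of $n$ internal nodes is precisely $R_n^{(k,l+1)}$. The hypothesis $0\leq l\leq k-2$ guarantees $1\leq l+1\leq k-1$, so $r=l+1$ is a legitimate value of the Raney parameter and the number $R_n^{(k,l+1)}$ is well defined. Chaining the bijection with this enumeration shows that the number of $(k,l)$-threshold sequences of length $n$ equals $R_n^{(k,l+1)}$; substituting $r=l+1$ into the defining Equation (\ref{eq:Raney}) then produces the claimed closed form $\frac{l+1}{kn+l+1}\binom{kn+l+1}{n}$.

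I do not anticipate any genuine difficulty at this stage, since all of the combinatorial content has already been established in Theorem \ref{thm:main}. The corollary is purely a matter of composing the bijection with the known counting of tuples of $k$-ary trees by Raney numbers and reading off the explicit formula, so no separate argument or auxiliary computation is required.
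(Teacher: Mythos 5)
Your proposal is correct and follows essentially the same route as the paper, which likewise derives the corollary from Theorem \ref{thm:main}, the interpretation of $R_n^{(k,r)}$ as counting ordered $r$-tuples of $k$-ary trees (Remark \ref{rem:Raneycountsktrees}; the paper's citation of the example is evidently a mislabeled reference to this remark), and Equation (\ref{eq:Raney}). Your only superfluous step is checking $l+1\leq k-1$: the Raney interpretation holds for every $r\geq 1$, so no upper bound on $r$ is needed.
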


\begin{proof}
 The result follows from Theorem \ref{thm:main},  Remark \ref{ex:exempleForest} and Equation (\ref{eq:Raney}).
\end{proof}

\begin{corollary}
For each $l\geq 1$, the number of proper $(k,l)$-threshold sequences of length $n$ is equal to the Raney number:

$$R_{n-1}^{(k,k+l)}=\frac{k+l}{(k-1)(n-1)+k+l}\binom{kn+l-1}{n-1}.$$
\label{cor:properseq}
\end{corollary}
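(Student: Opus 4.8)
The plan is to leverage the bijection $Tuple$ of Theorem~\ref{thm:main} and read off what ``properness'' means on the tuple side. Recall from the construction (and Examples~\ref{ex:exempleForest0} and \ref{ex:exempleForest}) that the root of the first tree $A_S^1$ carries the label $w_1=k(n-r_0)+l_1=kn+l_1=s_n$, so that $S$ is proper, i.e. $s_n=kn+l$, exactly when $l_1=l$; in that case $A_S^1$ occupies the last position $l_1+1=l+1$ of $Tuple(S)$. Since $A_S^1$ always contains $s_n$ as an internal node, it is non-trivial. First I would therefore establish the characterization: under $Tuple$, proper $(k,l)$-threshold sequences of length $n$ correspond precisely to those $(l+1)$-tuples of $k$-ary trees with $n$ internal nodes in total whose last component is non-trivial (a non-proper sequence has $s_n\leq kn+l-1$, hence $l_1<l$, and leaves position $l+1$ occupied by $\lambda$).

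Next I would exhibit a bijection between these $(l+1)$-tuples and ordered $(k+l)$-tuples of $k$-ary trees with $n-1$ internal nodes in total. Given $(A_1,\ldots,A_l,A_{l+1})$ with $A_{l+1}$ non-trivial, I would write $A_{l+1}$ as its root together with its $k$ ordered subtrees $B_1,\ldots,B_k$, and send the tuple to $(A_1,\ldots,A_l,B_1,\ldots,B_k)$. Removing the single root decreases the total number of internal nodes from $n$ to $n-1$, and the map is invertible: any $(k+l)$-tuple $(A_1,\ldots,A_l,B_1,\ldots,B_k)$ is recovered by grafting $B_1,\ldots,B_k$ under a new common root to form a non-trivial tree in position $l+1$. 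By Remark~\ref{rem:Raneycountsktrees}, ordered $(k+l)$-tuples of $k$-ary trees with $n-1$ internal nodes in total are counted by $R_{n-1}^{(k,k+l)}$, and substituting $n\to n-1$, $r\to k+l$ in Equation~(\ref{eq:Raney}) yields the two closed forms, in particular $\frac{k+l}{(k-1)(n-1)+k+l}\binom{kn+l-1}{n-1}$.

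As a consistency check (and an alternative route that avoids the tree-splitting bijection), I would note that for $l\geq 1$ the non-proper $(k,l)$-threshold sequences are exactly the $(k,l-1)$-threshold sequences, since $s_n\leq kn+l-1$ forces $s_i\leq s_n\leq k n+(l-1)$ for every $i$. By Corollary~\ref{cor:numberthresholdseq} the number of proper sequences is then $R_n^{(k,l+1)}-R_n^{(k,l)}$, and one verifies the identity $R_n^{(k,l+1)}-R_n^{(k,l)}=R_{n-1}^{(k,k+l)}$ directly from Equation~(\ref{eq:Raney}).

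The main obstacle is the first step: pinning down rigorously that $l_1=l$ characterizes properness and that this places the (always non-trivial) tree $A_S^1$ in the last slot while non-proper sequences vacate it. This is already implicit in the proof of Theorem~\ref{thm:main}, but it must be stated cleanly so that the subsequent splitting of the last tree into its root and $k$ ordered subtrees is unambiguous; the remaining steps are then routine.
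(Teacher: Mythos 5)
Your proposal is correct, but your primary argument is genuinely different from the paper's proof. The paper's proof is exactly your ``consistency check'': for $l\geq 1$ the non-proper $(k,l)$-threshold sequences are precisely the $(k,l-1)$-threshold sequences, so by Corollary~\ref{cor:numberthresholdseq} the proper ones number $R_n^{(k,l+1)}-R_n^{(k,l)}$, and the paper then invokes ``a well known recurrence relation or a simple verification'' to identify this difference with $R_{n-1}^{(k,k+l)}$. Your main route instead proves the count bijectively: properness of $S$ is equivalent to $l_1=l$, i.e.\ to the (always non-trivial) tree $A_S^1$ occupying the last slot of $Tuple(S)$ --- and this characterization does follow rigorously from the proof of Theorem~\ref{thm:main}, since the injectivity argument there gives $s_n=w_1=kn+l_1$ and Affirmation (B) forces every tree of $Forest(S)$ into a position at most $l_1+1$, so slot $l+1$ is non-trivial exactly when $l_1=l$. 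Deleting the root of that last tree and replacing it by its $k$ ordered subtrees is then a bijection onto ordered $(k+l)$-tuples of $k$-ary trees with $n-1$ internal nodes in total, which Remark~\ref{rem:Raneycountsktrees} counts by $R_{n-1}^{(k,k+l)}$. What your route buys is a combinatorial explanation of the identity
\[
R_n^{(k,l+1)}-R_n^{(k,l)}=R_{n-1}^{(k,k+l)},
\]
which the paper only verifies computationally, and in particular of why the Raney parameter becomes $k+l$ (the $l$ untouched slots plus the $k$ subtrees of the deleted root); the cost is the extra work of pinning down the properness characterization on the tuple side, which the paper's two-line subtraction argument avoids entirely.
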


\begin{proof}
By Corollary \ref{cor:numberthresholdseq}, the number of proper $(k,l)$-threshold sequences of length $n$ is equal to $R_n^{(k,l+1)}-R_n^{(k,l)}$, and a well known recurrence 
 relation or a simple verification indicates that this value is  $R_{n-1}^{(k,k+l)}$.
\end{proof}

When $l=0$, all the $(k,0)$-threshold sequences are proper, so the number of proper $(k,0)$-threshold sequences is $R_n^{(k,1)}$,
according to Corollary \ref{cor:numberthresholdseq}.

\section{Combinatorial identities}\label{sec:combinatorailidentities}

In this section, we deduce from our previous results three combinatorial identities, obtained when $k=2$ and $k=3$.

\subsection{Case $k=2$: Catalan numbers}

\begin{proposition}
 Catalan numbers $C_n$ satisfy for all $n\geq 1$ the recurrence relation:
 
 \begin{equation}
  C_n= \sum_{\substack{r+s+t=n-1\\ r,s\geq 1;\, t\geq 0}}^{}C_rC_s2^t+2^{n-1}
 \label{eq:recCatalan}
 \end{equation}

\end{proposition}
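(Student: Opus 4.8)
The plan is to prove the identity combinatorially, by partitioning the $(2,0)$-threshold sequences of length $n$, which by Corollary \ref{cor:numberthresholdseq} (with $k=2$, $l=0$) number exactly $R_n^{(2,1)}=C_n$. Recall that for $k=2$ the standard values of $s_i$ are $2i$ and $2i+1$, the high values are those exceeding $2i+1$, and $s_n=2n$ is forced. I would first dispose of the sequences with no high value at all: here each of $s_1,\ldots,s_{n-1}$ may independently equal $2i$ or $2i+1$ (strict monotonicity is automatic, since $2(i+1)>2i+1$) and $s_n=2n$, so there are exactly $2^{n-1}$ of them. These account for the free term $2^{n-1}$.

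For the remaining sequences I would group them according to the number $t\ge 0$ of leading standard values, so that $s_1,\ldots,s_t$ are standard and $s_{t+1}$ is the first high value. As above the prefix $s_1,\ldots,s_t$ can be chosen freely, giving a factor $2^t$; and since any high value satisfies $s_{t+1}\ge 2(t+1)+2>2t+1\ge s_t$, the prefix is independent of the rest. The suffix $(s_{t+1},\ldots,s_n)$ is, after the shift of Remark \ref{rem:decalage}, a $(2,0)$-threshold sequence of length $m:=n-t$ whose first element is high; write $D_m$ for the number of such sequences. This yields $C_n=2^{n-1}+\sum_{t\ge 0}2^t D_{n-t}$.

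It then remains to compute $D_m$ by the left-to-right block decomposition used for Equation (\ref{eq:anmixte}), now specialized to $k=2$. If $s_1,\ldots,s_h$ is the maximal run of high values and $s_{h+1}$ is the first standard value, then $1\le h\le m-2$ (no high value is available for $s_{m-1}$, because $s_m=2m$). The closing constraints force $s_{h+1}=2(h+1)+1$ and $s_h=2(h+1)$, after which $(s_1,\ldots,s_h)$ ranges exactly over the offset-$2$ $(2,0)$-threshold sequences of length $h$ (there are $C_h$ of them, by Remark \ref{rem:decalage}), while $(s_{h+2},\ldots,s_m)$ is a free offset-$2(h+1)$ sequence of length $m-h-1$ (there are $C_{m-h-1}$). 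Hence $D_m=\sum_{h=1}^{m-2}C_h C_{m-h-1}=\sum_{r+s=m-1,\,r,s\ge 1}C_r C_s$. Substituting into $C_n=2^{n-1}+\sum_{t\ge 0}2^t D_{n-t}$ and setting $r,s\ge 1$, $t\ge 0$ with $r+s+t=n-1$ gives the claimed formula.

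The step I expect to be the main obstacle is the precise bookkeeping in the block count $D_m$: verifying that the standard value closing the initial high-value block is forced to be $2(h+1)+1$ (ruling out $2(h+1)$, which would leave no room for $s_h$), and that the remaining high-value prefix is in bijection, via the offset shift of Remark \ref{rem:decalage}, with a genuine $(2,0)$-threshold sequence of length $h$ rather than some constrained variant; everything else is routine reindexing. As a sanity check I would confirm the identity algebraically, either by unrolling the recurrence $C_n=2C_{n-1}+\sum_{h=1}^{n-2}C_h C_{n-h-1}$ (the $k=2$, $l=0$ analogue of Equation (\ref{eq:anmixte})) $n-1$ times, or via the Catalan generating function $C(x)=1+xC(x)^2$, both of which reproduce the same expression.
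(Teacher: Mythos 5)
Your proof is correct, and it rests on the same foundation as the paper's: identify $C_n$ with the number of $(2,0)$-threshold sequences via Corollary \ref{cor:numberthresholdseq}, split off the $2^{n-1}$ all-standard sequences, and decompose the rest into a high-value run flanked by two blocks counted by $C_r$, $C_s$ and a power of $2$. But your decomposition is the mirror image of the paper's, and this changes the bookkeeping in a real way. The paper anchors at the \emph{last} maximal run of high values (its indices $a$ and $b$): the sequence reads as an arbitrary prefix ending in a standard value, then the high run, the forced value $2a+1$, and an all-standard tail contributing $2^{n-1-a}$; counting that prefix by $C_{b+1}$ requires the $k=2$ analogue of Remark \ref{rem:ajplus1}, which identifies such prefixes with $(2,0)$-threshold sequences of length $b+1$. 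You instead anchor at the \emph{first} maximal run of high values: the all-standard block ($2^t$) comes first, and what follows the forced closing value is, after the offset shift of Remark \ref{rem:decalage}, a genuine $(2,0)$-threshold sequence counted directly by a Catalan number --- so you never need the prefix-counting remark at all, at the modest cost of introducing the intermediate quantity $D_m$ and a second-level decomposition, where the paper does everything in one pass over the pair $(a,b)$. The step you flagged as delicate --- that the standard value closing the high run is forced to be $2(h+1)+1$ with $s_h=2(h+1)$ --- is argued correctly and is exactly the forcing the paper notes parenthetically ("these are the only possible values"); both routes then finish with the same reindexing over $r+s+t=n-1$.
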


\begin{proof}
 By Corollary \ref{cor:numberthresholdseq} for $k=2$, $l=0$ and recalling that $C_n=C_n^{(2)}=R_n^{(2,1)}$, we deduce that 
 $C_n$ is the number of $(2,0)$-threshold sequences of length $n$.
 
 As in Section \ref{sec:three}, we propose a count of the $(2,0)$-threshold sequences $S$ of length $n$ that
 allows us to obtain Equation (\ref{eq:recCatalan}). The last element of
 these sequences is always $s_n=2n$. Similarly to the case $k=3$, for each $i$ with $1\leq i\leq n-1$, the values 
 $2i,2i+1$ are called {\em standard values} for $s_i$, and the values larger than $2i+1$ are called {\em high values} for $s_i$. 
 See Figure \ref{fig:standard}. We assume that $s_0=0$, and this value is neither high nor standard.
 
 For each $(2,0)$-threshold sequence $S$ of length $n$, we introduce the following notation:
 \begin{align*}
a&=\max\{j\, |\,s_{j-1} \text{ is not a standard value and } s_{j}   \text{ is a standard value}\}\\
b&=\max\{j\, |\,s_j \text{ is not a high value value and } s_{j+1} \text{ is a high value}\}
 \end{align*}
 
  Then $a=1$ if and only if the sequence $S$ contains only standard values. There are $2^{n-1}$ such $(2,0)$-threshold sequences.
  
  Otherwise, $2\leq a\leq n-1$ since there are no high values for $s_{n-1}$. In this case, $S$ is made of 
  1) a $(2,0)$-threshold subsequence of length $a$ such that $s_{a-1}=2a$ and $s_a=2a+1$ (these are the only possible values),
 concatenated with 2) any subsequence $(s_{a+1} \ldots s_{n-1})$ of length $n-1-a$ made of standard elements, and followed by $2n$. 

 The subsequences described in 2) are counted by $2^{n-1-a}$ since each $s_i$ may take one of two precise
 values, for each $i$ with $a+1\leq i\leq n-1$. We need $b$ to count the subsequences described in 1). 
 We have $0\leq b\leq a-2$. Each of the subsequences described in 1) are made of 1') an arbitrary
 $(2,0)$-subsequence $(s_1 \ldots s_b)$ with $s_b\in \{2b,2b+1\}$, followed by 2') an arbitrary $(2,0)$-subsequence 
 $s_{b+1}, \ldots, s_{a-1}$ with $s_i\geq 2i+2$ for each $i$ with $b+1\leq i\leq a$ and $s_{a-1}=2a$ as explained above; the sequence 
 ends with $s_a=2a+1$.  By Remark \ref{rem:ajplus1},  the subsequences in 1') are in bijection with the $(2,0)$-threshold sequences of length $b+1$.  Then these
 sequences are enumerated by $C_{b+1}$. The subsequences in 2') are the $(2,0)$-threshold sequences of length $a-1-b$ and offset 2.
 
Thus the total number of $(2,0)$-threshold sequences of length $n$ is
 \begin{equation}
  C_n= \sum \limits_{a=2}^{n-1}\sum \limits_{b=0}^{a-2}C_{b+1}C_{a-1-b}2^{n-1-a}+2^{n-1}
  \label{eq:13}
 \end{equation}

 Let $r=b+1, s=a-r$ and $t=n-1-(r+s)$. Then we successively have:
 
 \begin{align*}
   C_n&= \sum \limits_{a=2}^{n-1}\sum \limits_{r=1}^{a-1}C_{r}C_{a-r}2^{n-1-a}+2^{n-1}
   =\sum \limits_{r=1}^{n-2}\sum \limits_{a=r+1}^{n-1}C_{r}C_{a-r}2^{n-1-a}+2^{n-1}\\
   &=\sum \limits_{r=1}^{n-2}\sum \limits_{s=1}^{n-1-r}C_{r}C_{s}2^{n-1-(s+r)}+2^{n-1}
   = \sum_{\substack{r+s+t=n-1\\  r,s\geq 1;\, t\geq 0}}^{}C_rC_s2^t+2^{n-1}
 \end{align*}

The conclusion is proved. 
\end{proof}


\subsection{Case $k=3$: Relations between $T_n$ and $U_n$}

According to the introduction and to Corollary   \ref{cor:anTn}, we have: 

\begin{enumerate}[itemsep=0pt]
\item $T_n=a_n=C_n^{(3)}$, and this value counts the number of ternary trees with $n$ internal nodes, as well as the simple 3-threshold sequences of length $n$.

\item $U_n=b_n=R_n^{(3,2)}$, and this value counts the number of ordered pairs of ternary trees with a total of $n$ internal nodes, 
as well as the double 3-threshold sequences of length $n$.
\end{enumerate}

\begin{proposition}
 Assuming $T_0=U_0=1$, the following relations hold for $n\geq 1$:
 
\begin{align*}
 2\sum \limits_{h=0}^{n-1}\frac{1}{h+1}T_hT_{n-h-1}& =3U_{n-1}-T_n\\
 2\sum \limits_{h=0}^{n-1}\frac{1}{3h+1}U_hU_{n-h-1}& =4T_{n}-U_n
\end{align*}
 \label{prop:lastrecs}
\end{proposition}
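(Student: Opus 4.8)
The plan is to pass to ordinary generating functions and reduce both identities to elementary functional equations that can then be checked by an explicit change of variables. I set $T(x)=\sum_{n\ge 0}T_nx^n$ and $U(x)=\sum_{n\ge 0}U_nx^n$, both analytic near $x=0$. Since $T_n$ counts ternary trees, $T$ satisfies the standard functional equation $T=1+xT^3$; and since $U_n=\sum_{h=0}^{n}T_hT_{n-h}$ by Equation (\ref{eq:bnfctan}) (equivalently, $U_n$ counts ordered pairs of ternary trees), we have $U=T^2$. I would first record the scalar identity $\frac{1}{h+1}T_h=\frac{1}{3h+1}U_h$, which is immediate from the closed forms in Corollary \ref{cor:anTn} via $\binom{3h+1}{h}=\frac{3h+1}{2h+1}\binom{3h}{h}$; this lets me replace the weight $\frac{1}{3h+1}U_h$ appearing in the second sum by $\frac{1}{h+1}T_h$.

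The weight $\frac{1}{h+1}$ is the signature of an integration, so I introduce $I(x)=\int_0^x T(t)\,dt=\sum_{h\ge 0}\frac{T_h}{h+1}x^{h+1}$. Reading each convolution as a product of power series and comparing coefficients of $x^n$ for $n\ge 1$, the two claimed identities become, respectively, the functional equations
\begin{align*}
2\,I(x)\,T(x)&=3x\,T(x)^2-T(x)+1,\\
2\,I(x)\,T(x)^2&=4\,T(x)-T(x)^2-3.
\end{align*}
Here the left-hand sides arise as $x\bigl(\tfrac{1}{x}I(x)\bigr)T(x)=I(x)T(x)$ and $x\bigl(\tfrac{1}{x}I(x)\bigr)U(x)=I(x)T(x)^2$, after accounting for the shifted index $n-h-1$, while the right-hand sides are simply $3xU(x)-(T-1)$ and $4(T-1)-(U-1)$ with $U=T^2$.

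The key computational step is to evaluate $I(x)$ in closed form. Using the parametrization $y=T(x)$, so that $x=(y-1)/y^3$ and $dx=\frac{3-2y}{y^4}\,dy$, I obtain $I=\int y\,dx=\int\frac{3-2y}{y^3}\,dy=-\frac{3}{2y^2}+\frac{2}{y}+C$; the initial condition $I(0)=0$ at $y=1$ forces $C=-\frac12$. Substituting this $I$, together with $3xy^2=3(y-1)/y$, into the two functional equations collapses them to $-\frac{3}{y}+4-y=4-\frac{3}{y}-y$ and $-3+4y-y^2=4y-y^2-3$ respectively, which hold trivially. Since equality of (analytic, hence formal) power series is equivalent to coefficientwise equality, the two stated identities follow for all $n\ge 1$.

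I expect the main obstacle to be the bookkeeping in the two reductions—correctly matching the shifted summation index and tracking the factor $x$ produced by $I(x)/x$—together with pinning down the constant of integration from $I(0)=0$. Once $I(x)$ is in hand, the verification of each functional equation is a one-line polynomial check in the variable $y=T(x)$.
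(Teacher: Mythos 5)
Your proof is correct, but it takes a genuinely different route from the paper. The paper stays entirely at the level of the recurrences derived combinatorially in Section \ref{sec:three}: it combines the left-to-right counts (Equations (\ref{eq:anmixte}) and (\ref{eq:bnmixte})) with the right-to-left counts (Equations (\ref{eq:anfctbn}) and (\ref{eq:bnfctan})) and three scalar identities relating $a_n$ and $b_n$ (one of which, $\frac{1}{n+1}a_n=\frac{1}{3n+1}b_n$, is exactly the weight-conversion identity you also use), and manipulates finite sums until both identities drop out. You bypass the threshold-sequence recurrences (\ref{eq:anmixte}) and (\ref{eq:bnmixte}) entirely, using only the standard facts $T=1+xT^3$ and $U=T^2$, and reduce everything to two functional equations for $I(x)=\int_0^x T(t)\,dt$; the closed form $I=-\frac{3}{2y^2}+\frac{2}{y}-\frac12$ under the parametrization $x=(y-1)/y^3$ then makes each identity a one-line polynomial check in $y$. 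I verified your two translations (including the vanishing constant terms, so the functional equations are indeed equivalent to the claims for $n\geq 1$), the change of variables (legitimate since $T$ is analytic near $0$ and $dx/dy=1\neq 0$ at $y=1$), and the final substitutions; all are sound. What each approach buys: yours is mechanical, self-contained modulo textbook facts about Fuss--Catalan generating functions, and generalizes immediately to any convolution identity with $\frac{1}{h+1}$-type weights; the paper's is more elementary (no integration or analytic machinery) and, more to the point of the paper's narrative, exhibits the identities as direct consequences of the threshold-sequence counting arguments, which is the object the paper is studying --- your proof, by contrast, makes no reference to threshold sequences at all.
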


\begin{proof}
 We use the equations proved in Section \ref{sec:three}, and thus also the notations $a_n$ and $b_n$, rather than $T_n$ and $U_n$,  
 during the proof.
 
 For each $n\geq 1$, the following equations may be obtained by basic computations using the close forms for $a_n$ and $b_n$: 
 
 \begin{align*}
 a_n+b_n&=\frac{2}{n+1}\binom{3n}{n}=\frac{2}{n+1}(2n+1)a_n=2(2-\frac{1}{n+1})a_n\\
  \frac{1}{n+1}a_n&=\frac{1}{3n+1}b_n\\
 3a_n-b_n&=\frac{1}{n}(b_n-a_n)=\frac{2}{3n+1}b_n
\end{align*}

 They are used below without recalling it.
 Then Equation (\ref{eq:anmixte}) and Equation (\ref{eq:bnfctan}) imply:

 \begin{align*}
 a_n-3a_{n-1}&= \sum\limits_{h=1}^{n-2}(a_h+b_h)a_{n-h-1}=2\sum\limits_{h=1}^{n-2}(2-\frac{1}{h+1})a_ha_{n-h-1}\\
 &=4\sum\limits_{h=1}^{n-2}a_ha_{n-h-1}-2\sum\limits_{h=1}^{n-2}\frac{1}{h+1}a_ha_{n-h-1}\\
 &=4(b_{n-1}-2a_{n-1})-2\sum\limits_{h=1}^{n-2}\frac{1}{h+1}a_ha_{n-h-1} 
 \end{align*}
 
 We deduce:
 \begin{align*}
  2\sum\limits_{h=1}^{n-2}\frac{1}{h+1}a_ha_{n-h-1}&=4b_{n-1}-a_n-5a_{n-1}\\
  2\sum\limits_{h=0}^{n-2}\frac{1}{h+1}a_ha_{n-h-1}&=4b_{n-1}-a_n-3a_{n-1}\\
  2\sum\limits_{h=0}^{n-1}\frac{1}{h+1}a_ha_{n-h-1}&=3b_{n-1}-a_n\\
 \end{align*}

The first equation in the proposition is proved. The approach is similar for the second one. Using Equation
(\ref{eq:bnmixte}) and Equation (\ref{eq:anfctbn}) we deduce:

 \begin{align*}
 b_n-3b_{n-1}-a_{n-1}&=\sum\limits_{h=1}^{n-2}(a_h+b_h)b_{n-h-1}=2\sum\limits_{h=1}^{n-2}(2-\frac{1}{h+1})a_hb_{n-h-1}\\
 &=4\sum\limits_{h=1}^{n-2}a_hb_{n-h-1}-2\sum\limits_{h=1}^{n-2}\frac{1}{h+1}a_hb_{n-h-1}\\
 &=4(a_n-b_{n-1}-a_{n-1})-2\sum\limits_{h=1}^{n-2}\frac{1}{h+1}a_hb_{n-h-1} 
 \end{align*}
 
This implies:
\begin{align*}
  2\sum\limits_{h=1}^{n-2}\frac{1}{h+1}a_hb_{n-h-1}&=4a_n-b_n-b_{n-1}-3a_{n-1}\\
  2\sum\limits_{h=1}^{n-2}\frac{1}{3h+1}b_hb_{n-h-1}&=4a_n-b_n-b_{n-1}-3a_{n-1}\\
  2\sum\limits_{h=1}^{n-1}\frac{1}{3h+1}b_hb_{n-h-1}&=4a_n-b_n-b_{n-1}-3a_{n-1}+\frac{2}{3n-2}b_{n-1}\\
  2\sum\limits_{h=1}^{n-1}\frac{1}{3h+1}b_hb_{n-h-1}&=4a_n-b_n-b_{n-1}-3a_{n-1}+3a_{n-1}-b_{n-1}\\
  2\sum\limits_{h=1}^{n-1}\frac{1}{3h+1}b_hb_{n-h-1}&=4a_n-b_n-2b_{n-1}\\
  2\sum\limits_{h=0}^{n-1}\frac{1}{3h+1}b_hb_{n-h-1}&=4a_n-b_n\\
 \end{align*}

 The second equation in the conclusion is now proved.
 \end{proof}

\section{Threshold sequences and Motzkin-like paths}\label{sec:other}

We now show that threshold sequences may be seen as variants of Motzkin paths.

Defined in \cite{donaghey1977motzkin} in relation with the Motzkin numbers used in \cite{motzkin1948relations},
a {\em Motzkin path} of length $n$ is a path on the integral lattice $\mathbb{Z}\times \mathbb{Z}$ never going below the $x$-axis, 
starting in position $(0,0)$ and whose steps are of three types: up steps (1,1), 
horizontal steps (1,0) and down steps (1,-1). Here, step $(a,b)$ indicates that the next position in the path is reached
by moving $a$ units to right and $b$ units to top with respect to the current position. Motzkin paths enumerate various combinatorial
objects, as shown in \cite{donaghey1977motzkin,stanley1999vol2} but also more recently in 
\cite{guibert2001vexillary,sulanke2001bijective,deutsch2002bijection,prodinger2020two}. Some generalizations of Motzkin paths have also
been investigated \cite{humphreys2010history}, among which those with long horizontal steps \cite{barcucci1999eco}.

{\em Motzkin $n$-paths} are Motzkin paths of length $n$ ending in $(n,0)$. They are counted by the formula below \cite{donaghey1977motzkin}:
\begin{equation}
M_n=\sum \limits_{k=0}^{\lfloor\frac{n}{2}\rfloor}\binom{n}{2k}C_k 
\label{eq:Motzkinnumber} 
\end{equation}

\noindent where $C_k$ is the Catalan number. 

Let us call a {\em $(k,l)$-extended Motzkin path} of length $n$ every path on the integral lattice $\mathbb{Z}\times \mathbb{Z}$ 
never going below the $x$-axis,  starting in position $(0,0)$ and ending in position $(n,h)$, with $0\leq h\leq l$, 
whose $n$ steps are of three types: up steps $(1,u)$ with $u\geq 1$, horizontal steps $(1,0)$ and down steps $(1,-t)$ with 
$1\leq t\leq k-1$. Similarly to Motzkin $n$-paths, we call a {\em $(k,l)$-extended Motzkin $n$-path} every 
$(k,l)$-extended Motzkin path with $n$ steps that starts in  $(0,0)$ and ends in $(n,0)$.

\begin{proposition}
 Let $k,l,n$ be three integers such that $k\geq 2$, $0\leq l\leq k-2$ and $n\geq 1$.  The 
 $(k,l)$-extended Motzkin paths of length $n$ and endpoint $(n,l)$ are in bijection with the proper $(k,l)$-threshold 
 sequences of length $n$.
 \label{prop:eqMotzkin}
\end{proposition}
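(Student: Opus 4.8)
The plan is to exhibit the bijection explicitly as an affine change of coordinates, sending the value $s_i$ to its height above the line of slope $k$. Concretely, to a proper $(k,l)$-threshold sequence $S=(s_1\ldots s_n)$ I would associate the lattice path $P(S)$ that starts at $(0,0)$ and, for $i=1,\ldots,n$, visits the point $(i,h_i)$ where $h_i:=s_i-ki$; equivalently the $i$-th step has vertical displacement $h_i-h_{i-1}$, with the convention $h_0:=0$. The inverse map sends a path through the points $(0,0),(1,h_1),\ldots,(n,h_n)$ to the sequence defined by $s_i:=ki+h_i$. These two maps are visibly inverse to one another, so the entire content of the proposition is the verification that each carries admissible objects to admissible objects.

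First I would check that $P(S)$ is a $(k,l)$-extended Motzkin path of length $n$ with endpoint $(n,l)$. The condition $s_i\ge ki$ defining a threshold sequence is exactly $h_i\ge 0$, so the path never drops below the $x$-axis; the properness condition $s_n=kn+l$ gives $h_n=l$, the required endpoint. For the step types, strict monotonicity $s_i>s_{i-1}$ rewrites as $h_i-h_{i-1}=(s_i-s_{i-1})-k\ge 1-k$, so every displacement is at least $-(k-1)$: a positive displacement is an up step $(1,u)$ with $u\ge 1$, a zero displacement is a horizontal step, and a negative displacement $-t$ satisfies $1\le t\le k-1$ and is a legal down step. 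The first step, $h_1-h_0=s_1-k\ge 0$, is automatically non-negative, as it must be to keep the path non-negative. Hence $P(S)$ is of the required form.

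Conversely, starting from such a path I would check that $s_i=ki+h_i$ is a proper $(k,l)$-threshold sequence. Non-negativity of the path gives $s_i\ge ki$; the down-step bound $h_i-h_{i-1}\ge 1-k$ gives $s_i-s_{i-1}\ge 1$, so the sequence is strictly increasing; and $h_n=l$ gives $s_n=kn+l$, i.e. properness. The one inequality that is not completely immediate is the upper bound $s_i\le kn+l$, equivalently $h_i\le k(n-i)+l$; this is the step I expect to require a word of argument. It follows by telescoping the down-step constraint: summing $h_{j}-h_{j+1}\le k-1$ over $j=i,\ldots,n-1$ yields $h_i-h_n\le (k-1)(n-i)$, whence $h_i\le l+(k-1)(n-i)\le l+k(n-i)$, which is exactly the desired bound, in fact with room to spare, so that the threshold upper bound is never the binding constraint. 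With both directions verified and the maps manifestly mutually inverse, the bijection follows.
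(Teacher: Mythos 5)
Your proposal is correct and takes essentially the same approach as the paper: the identical change of coordinates $h_i = s_i - ki$ (the paper's $y_i = s_i - ik$), the same verification of step legality and the endpoint $(n,l)$ in the forward direction, and the same telescoping of the down-step bound $h_i - h_n \le (k-1)(n-i)$ to recover the upper threshold constraint $s_i \le kn+l$ in the inverse direction. No meaningful differences to report.
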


\begin{proof}
 We associate with the proper $(k,l)$-threshold sequence $S$ of length $n$ the path $Path(S)$ starting in position $(0,0)$ and whose $i$-th
 step depends on the value of $s_i-s_{i-1}-k$, for $1\leq i\leq n$ ($s_0$ is supposed to be equal to $0$):
 
 \begin{enumerate}[itemsep=0pt]
  \item[$\bullet$] if $s_i-s_{i-1}>k$, then the $i$-th step is an up step $(1, s_i-s_{i-1}-k)$
  \item[$\bullet$] if $s_i-s_{i-1}=k$, then the $i$-th step is an horizontal step $(1, 0)$
  \item[$\bullet$] if $s_i-s_{i-1}<k$, then the $i$-th step is a down step $(1, s_i-s_{i-1}-k)$
 \end{enumerate}

Let $(i,y_i)$, $1\leq i\leq n$, be the point the
 path reaches after the $i$-th step. Then, according to the definition of $Path(S)$:
 
 \begin{equation}
 y_i=\sum \limits_{p=1}^{i}(s_p-s_{p-1}-k)=s_i-ik.
 \label{eq:xi}
 \end{equation}
 
 Since $S$ is a proper $(k,l)$-threshold sequence, we have that $y_i\geq 0$ and
 also that $y_n=s_n-nk= l$. Thus the endpoint of $Path(S)$ i $(n,l)$.  In order to show that $Path(S)$ is a $(k,l)$-extended Motzkin path of length $n$, it remains to
 verify  that the down steps satisfy $1\leq -(s_i-s_{i-1}-k)\leq k-1$. We have $s_i-s_{i-1}<k$ by the definition of a down step,
 and thus the inequality on the left side is verified. Moreover,  $-(s_i-s_{i-1}-k)\leq k-1$ is equivalent with
 $s_i-s_{i-1}\geq 1$ and this is necessarily true since threshold sequences are strictly increasing sequences. Thus the right side
 inequality is verified too.
 
 Equation (\ref{eq:xi}) easily implies that the function $Path()$ is an injective function. To show it is a bijection, consider
 a $(k,l)$-extended Motzkin path of length $n$ and endpoint $(n,l)$, and let $(i,y_i)$ be the coordinates of the point reached after $i$ steps. We have $y_i\geq 0$
 by definition. We also have
 $y_n=l$ and, since the down steps $(1,-t)$ satisfy $1\leq t\leq k-1$, we deduce that 
 $y_i\leq y_n+(n-i)(k-1)\leq l+nk-ik-n+i$, thus $y_i+ik \leq nk+l+(i-n)\leq nk+l$. Then, defining
 $S$ as the sequence with $s_i=y_i+ik$ for all $i$, $1\leq i\leq n$, we have $s_i\leq nk+l$ and $s_i-ik=y_i\geq 0$,
 thus $S$ is a $(k,l)$-threshold sequence. Moreover, $s_n-nk=y_n=l$ and the proof is finished.
\end{proof}

 \begin{figure}[t]
\vspace*{-1cm}
 \centering
 \includegraphics[width=0.3\textwidth]{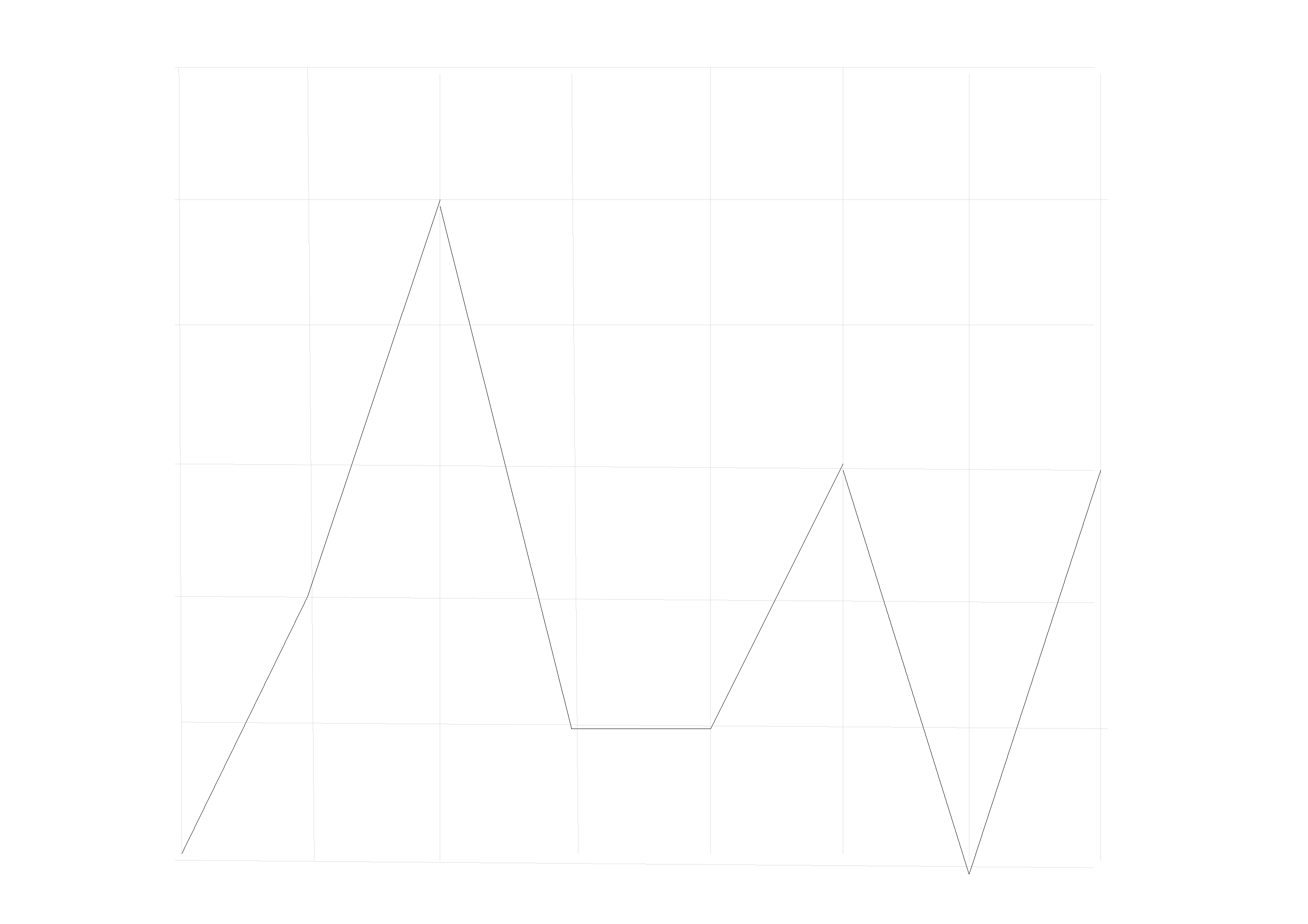}
 \caption{The $(5,4)$-extended Motzkin path associated with the $(5,4)$-threshold sequence  $S=(7\,\, 15\,\, 16\,\, 21\,\, 28\,\, 30\,\, 38)$.}
  \label{fig:MotzkinEx}
\end{figure}

\begin{example}
 Consider the proper $(5,3)$-threshold sequence $S$ of length $7$ given by $S=(7\,\, 15\,\, 16\,\, 21\,\, 28\,\, 30\,\,$ $38)$. The
 $(5,4)$-extended Motzkin path $Path(S)$ associated to it uses the following steps: $(1,2)$, $(1,3)$, $(1,-4)$, $(1,0)$, $(1,2)$, 
 $(1,-3)$, $(1,3)$. It starts in $(0,0)$ and ends in $(7,3)$. See Figure \ref{fig:MotzkinEx}. 
 \end{example}
 
Now, Proposition \ref{prop:eqMotzkin} and Corollary \ref{cor:properseq} imply:

\begin{corollary}
For each $l\geq 1$, the number of $(k,l)$-extended Motzkin paths of length $n$ ending in $(n,l)$ is equal to the Raney number:
$$R_{n-1}^{(k,k+l)}=\frac{k+l}{(k-1)(n-1)+k+l}\binom{kn+l-1}{n-1}.$$ 
\end{corollary}
 
Using Proposition   \ref{prop:eqMotzkin} for each $h=0, 1, \ldots, l$ and Corollary \ref{cor:numberthresholdseq}, we deduce that:

 \begin{corollary}
 The $(k,l)$-extended Motzkin paths of length $n$ 
 are in bijection with the $(k,l)$-threshold sequences of length $n$. Thus they are counted by the Raney number: 
 
 $$R_n^{(k,l+1)}=\frac{l+1}{kn+l+1}\binom{kn+l+1}{n} $$
 \label{cor:bijallMotzkin}
 \end{corollary}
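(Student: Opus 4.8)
The plan is to reuse the single map $Path$ from Proposition \ref{prop:eqMotzkin} and to extend it from proper sequences to all $(k,l)$-threshold sequences by stratifying both sides according to the final height / top value. The crucial observation is that the definition of $Path(S)$ never mentions $l$: the $i$-th step is read off solely from $s_i-s_{i-1}-k$, and by Equation (\ref{eq:xi}) the resulting path reaches height $y_n=s_n-nk$ at its endpoint. So $Path$ is already defined on every $(k,l)$-threshold sequence, and I only need to check that it lands in, and surjects onto, the full set of $(k,l)$-extended Motzkin paths of length $n$.

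First I would set up the two matching stratifications. On the sequence side, every $(k,l)$-threshold sequence of length $n$ satisfies $kn\leq s_n\leq kn+l$ (the lower bound from $s_n\geq kn$, the upper bound from the defining inequality), so writing $h=s_n-kn$ gives $0\leq h\leq l$; moreover a $(k,l)$-threshold sequence with $s_n=kn+h$ is, via $ki\leq s_i\leq kn+h$, exactly a proper $(k,h)$-threshold sequence, and conversely every proper $(k,h)$-threshold sequence with $h\leq l$ is a $(k,l)$-threshold sequence. Thus the $(k,l)$-threshold sequences of length $n$ form the disjoint union over $h=0,1,\ldots,l$ of the proper $(k,h)$-threshold sequences of length $n$. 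On the path side, the step-type constraints ($u\geq 1$ for up steps, $1\leq t\leq k-1$ for down steps) depend only on $k$, so a $(k,l)$-extended Motzkin path whose endpoint is exactly $(n,h)$ is nothing but a $(k,h)$-extended Motzkin path ending in $(n,h)$; hence the $(k,l)$-extended Motzkin paths of length $n$ form the disjoint union over $h=0,\ldots,l$ of those ending at exactly $(n,h)$.

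Next I would glue the level-by-level bijections. By Equation (\ref{eq:xi}), $Path$ sends a sequence with $s_n=kn+h$ to a path ending at $(n,h)$, so it respects the stratifications. Applying Proposition \ref{prop:eqMotzkin} with parameter $h$ in place of $l$ (legitimate since $0\leq h\leq l\leq k-2$, and that statement already includes the case $h=0$) shows that on the $h$-th stratum $Path$ is a bijection between proper $(k,h)$-threshold sequences and $(k,h)$-extended Motzkin paths ending at $(n,h)$. Since the strata on both sides are indexed by the same parameter $h$ and $Path$ matches them, the union of these bijections is a single bijection between all $(k,l)$-threshold sequences and all $(k,l)$-extended Motzkin paths of length $n$. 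The enumeration formula then follows at once from Corollary \ref{cor:numberthresholdseq}, which identifies the number of $(k,l)$-threshold sequences of length $n$ as $R_n^{(k,l+1)}$ (equivalently, by summing the counts of Corollary \ref{cor:properseq} over $h=1,\ldots,l$ together with the $R_n^{(k,1)}$ term for $h=0$).

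The argument is essentially bookkeeping, so I do not expect a serious obstacle. The one place to be careful is the alignment of the two stratifications: I must confirm that the parameter $h=s_n-kn$ on the sequence side corresponds exactly to the endpoint height on the path side (this is precisely Equation (\ref{eq:xi})), and that ``$(k,l)$-extended, ending at height $h$'' genuinely \emph{coincides} with ``$(k,h)$-extended, ending at $(n,h)$'' rather than merely having equal cardinality --- which hinges on the step constraints being $l$-independent.
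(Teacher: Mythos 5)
Your proposal is correct and takes essentially the same route as the paper: the paper's own (very terse) proof likewise applies Proposition \ref{prop:eqMotzkin} for each endpoint height $h=0,1,\ldots,l$ and then invokes Corollary \ref{cor:numberthresholdseq}, which is precisely the stratification of sequences by $s_n-kn$ and of paths by final height that you work out. Your write-up merely makes explicit the bookkeeping (the two matching disjoint-union decompositions and their alignment via Equation (\ref{eq:xi})) that the paper leaves implicit.
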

 
 When $l=0$, the $(k,0)$-extended Motzkin paths of length $n$ are exactly the $(k,0)$-extended Motzkin $n$-paths. The case
 $k=2$ is worth to be noticed, since the down steps are in this case $(1,-1)$ steps only, as in the Motzkin paths. We deduce
 from the previous corollary and the remark that $R_n^{(2,1)}=C_n$ that:
 
\begin{corollary}
 Motzkin-like $n$-paths obtained by allowing arbitrarily long up steps in Motzkin $n$-paths are enumerated by the Catalan numbers:
 
$$C_n=\frac{1}{n+1}\binom{2n}{n}$$
 
\end{corollary}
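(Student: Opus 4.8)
The plan is to recognize the final statement as the specialization of Corollary \ref{cor:bijallMotzkin} to $k=2$ and $l=0$, so that almost all of the work has already been done. First I would pin down precisely which paths are meant by ``Motzkin-like $n$-paths obtained by allowing arbitrarily long up steps'': these are the lattice paths from $(0,0)$ to $(n,0)$, never dipping below the $x$-axis, built from horizontal steps $(1,0)$, unit down steps $(1,-1)$, and up steps $(1,u)$ of arbitrary height $u\geq 1$. I would then check that this is exactly the class of $(2,0)$-extended Motzkin $n$-paths. Indeed, setting $k=2$ forces the down steps $(1,-t)$ with $1\leq t\leq k-1$ to be unit down steps $(1,-1)$, just as in ordinary Motzkin paths, while the up-step rule $u\geq 1$ supplies precisely the arbitrarily long up steps. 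Setting $l=0$ makes the endpoint condition $0\leq h\leq l$ collapse to $h=0$, so the paths end in $(n,0)$; as noted in the paragraph preceding the statement, the $(2,0)$-extended Motzkin paths of length $n$ coincide with the $(2,0)$-extended Motzkin $n$-paths.

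With this identification in hand, I would simply invoke Corollary \ref{cor:bijallMotzkin} with $k=2$ and $l=0$, which tells us that these paths are enumerated by $R_n^{(2,1)}$. The only remaining step is the arithmetic identity $R_n^{(2,1)}=C_n$, which I would either quote from the introduction (where $C_n=C_n^{(2)}=R_n^{(2,1)}$ is recorded) or verify directly from Equation (\ref{eq:Raney}):
$$R_n^{(2,1)}=\frac{1}{2n+1}\binom{2n+1}{n}=\frac{1}{2n+1}\cdot\frac{(2n+1)!}{n!\,(n+1)!}=\frac{(2n)!}{n!\,(n+1)!}=\frac{1}{n+1}\binom{2n}{n}=C_n.$$
Combining the two gives the claimed count $C_n=\frac{1}{n+1}\binom{2n}{n}$.

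Since every ingredient is already established, there is no genuine obstacle here; the statement is a corollary in the strict sense. The one point that warrants care --- and the only place where an error could creep in --- is the explicit verification that the two path definitions match, in particular that the constraint $1\leq t\leq k-1$ really does leave only unit down steps when $k=2$, and that the endpoint condition $0\leq h\leq l$ degenerates correctly to $h=0$ when $l=0$. Once that identification is made precise, the rest is a citation of Corollary \ref{cor:bijallMotzkin} together with the one-line binomial simplification above.
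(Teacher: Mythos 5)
Your proposal is correct and follows exactly the paper's own route: the paper likewise deduces this corollary from Corollary \ref{cor:bijallMotzkin} by specializing to $k=2$, $l=0$, noting that the down steps then reduce to $(1,-1)$ steps and the endpoint to $(n,0)$, and using $R_n^{(2,1)}=C_n$. Your explicit binomial verification of $R_n^{(2,1)}=C_n$ is a harmless addition to what the paper simply cites.
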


%

\begin{figure}[t!]
\vspace*{-3.5cm}

 \centering
 \includegraphics[width=0.99\textwidth]{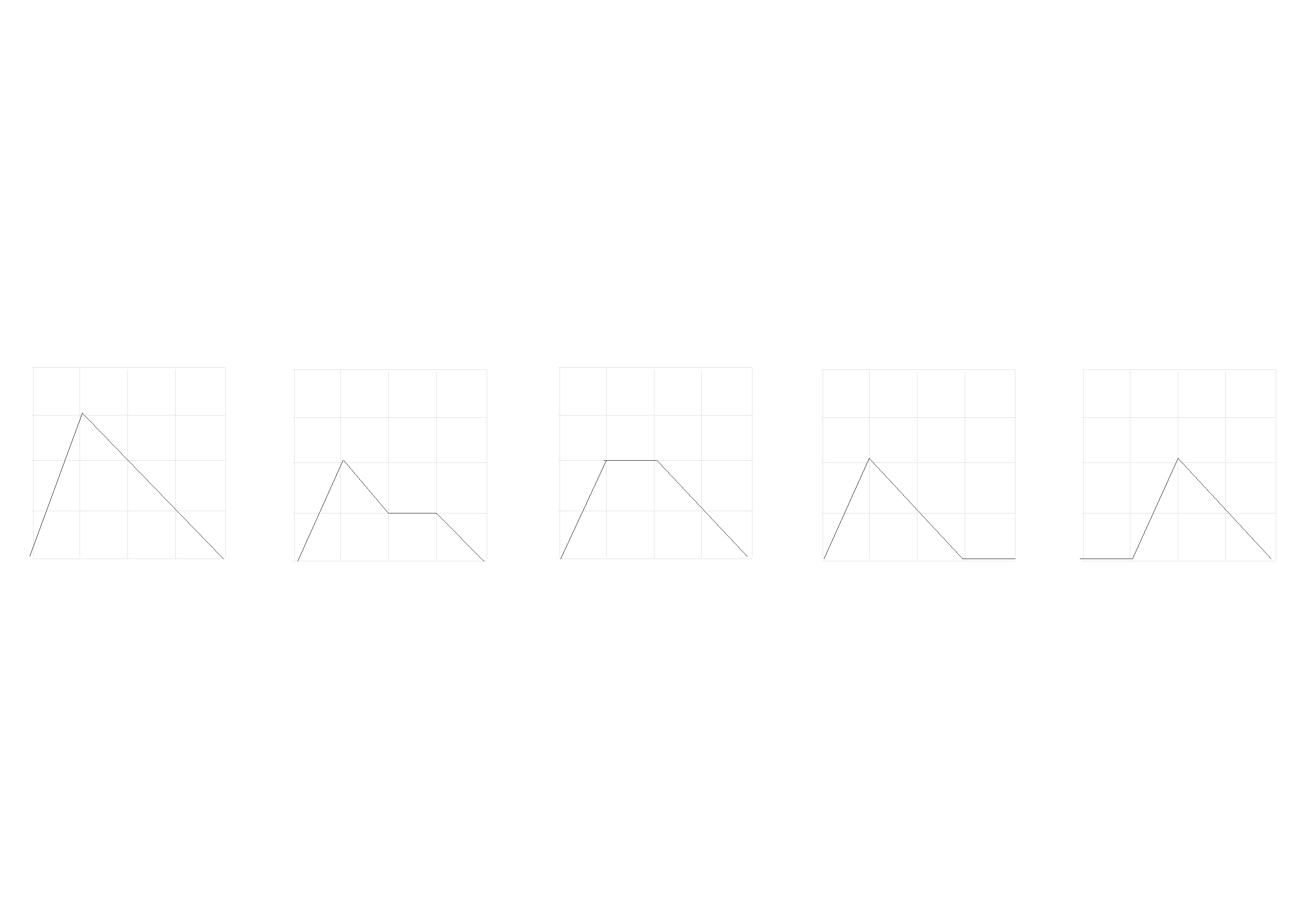}
\vspace*{-3.5cm}
\caption{The five Motzkin-like $4$-paths with long up steps, that are not Motzkin $4$-paths.} 
 \label{fig:Motzkin2}
\end{figure}

\begin{example}
 Figure \ref{fig:Motzkin2} shows the five Motzkin-like $4$-paths with arbitrarily long up steps which are not Motzkin $4$-paths.
 The number of Motzkin $4$-paths, computed with Equation~(\ref{eq:Motzkinnumber}) is 9. The total number of Motzkin-like $4$-paths 
 with arbitrarily long up steps is thus 14, which is the Catalan number $C_4$.
\end{example}

\section{Conclusion}\label{sec:conclusion}

The bijections between $(k,l)$-threshold sequences, ordered $(l+1)$-tuples of $k$-ary trees and Motzkin-like paths with 
long up and down steps we presented in the paper 
provide new combinatorial interpretations  for the Raney numbers, when the parameters $r$ and $k$  satisfy $r\leq k-1$. The
case where $k\leq r\leq 2k-2$ also gets new interpretations, since in this case the Raney numbers count proper
$(k,l)$-threshold sequences and Motzkin-like paths with long up and down steps that have a fixed endpoint. 

Threshold sequences may also be represented as particular cases of other combinatorial objects. For instance, $(k,l)$-threshold sequences of length
$n$ are also in bijection with
$k$-ballot sequences \cite{renault2007four} over the alphabet $\{A,B\}$ with  $a=kn+l+1$ letters $A$, $b=n$ letters $B$, whose letters 
$B$ are isolated and whose last letter is $B$. This is done by associating with each $(k,l)$-threshold sequence $S$ of length $n$
the $k$-ballot sequence  $W(S)=AW_1W_2\ldots W_n,$ where $W_i$ is a  sequence of $s_i-s_{i-1}$  letters $A$, followed by a 
letter $B$ ($s_0=0$ by convention). Then the number of $k$-ballot sequences with  $kb<a<kb+k$, 
whose letters $B$ are isolated and whose last letter is $B$, is equal to:
 
 $$R_b^{(k,a-kb)}=\frac{a-kb}{a}\binom{a}{b}$$
 
\noindent To see this, it is sufficient to define $l=a-kb-1$ and to conclude using Corollary \ref{cor:numberthresholdseq} and Equation (\ref{eq:Raney}). 

In consequence, threshold sequences are closely related to existing, and useful, combinatorial objects, and show efficient in bringing a new
point of view on these objects.

\bibliographystyle{plain}
\bibliography{SteepSeq}

\end{document}